\documentclass{amsart}
\usepackage{amscd}
\usepackage{amsfonts}
\usepackage{amsmath}
\usepackage{amssymb}
\usepackage{latexsym}
\usepackage{amsthm,color}
\usepackage[all]{xy}
\usepackage{epsfig}
\usepackage{graphicx}
\usepackage{color}
\usepackage{tikz}
\usetikzlibrary{calc}
\newtheorem{proposition}{Proposition}

\newtheorem{theorem}{Theorem}

\theoremstyle{definition}
\newtheorem{definition}{Definition}
\theoremstyle{definition}
\newtheorem{example}{Example}
\theoremstyle{definition}
\newtheorem{remark}{Remark}

\newtheorem{problem}{Problem}

\newcommand{\bm}{\mathbf}

\begin{document}
\title[
Envelopes created by curve families in the Lorentz-Minkowski plane]
{Envelopes created by curve families in the Lorentz-Minkowski plane}
%%%%%%%%%%%%%%%%%%%%%%%%%%%%%%%%%%%%%%%%%%%%%%%%%%%%%%%%%%%%%%%%%%%
\author[M.~Takahashi]{Masatomo Takahashi%\thanks{Corresponding author.}
}
\address{
Muroran Institute of Technology,
Muroran 050-8585, Japan}
\email{masatomo@muroran-it.ac.jp}

\author[Y.~Wang]{Yongqiao Wang%\thanks{Corresponding author.}
}
\address{
School of Science, Dalian Maritime University, Dalian 116026, P.R. China
}
\email{wangyq@dlmu.edu.cn}
%%%%%%%%%%%%%%%%%%%%%%%%%%%%%%%%%%%%%%%%%%%%%%

%%%%%%%%%%%%%%%%%%%%%%%%%%%%%%%%%%%%%%%%%%%%%%%
%%%%%%%%%%%%%%%%%%%%%%%%%%%%%%%%%%%%%%%%%%%%%%%
\begin{abstract}
Classical definitions of envelopes are vague for singular curves in the Lorentz-Minkowski plane. To study envelopes of such curves, we introduce a family of smooth curves that may admit singular points. Notably, the curves considered here may contain both non-lightlike and lightlike points. We then propose a new definition of envelopes for such curves and investigate their properties using lightlike tangential data. Furthermore, we examine the contact between the curves and their envelopes at singular points.
\end{abstract}
\subjclass[2020]{53B30, 53C50} %, 53A40, 53A04}
\keywords{Family of mixed type curves, Family of singular curves, Envelope, Lightlike tangential data, Contact.}

%\thanks{}

\date{}

\maketitle
\section{Introduction}
Envelopes are classical objects in differential geometry, with numerous applications to differential equations, differential geometry, and physics (see, e.g., \cite{BG1,brucegiblin,EFK,izumiya1,nishimura,Takahashi1}). The envelope of a family of plane curves is a curve that is tangent to each member of the family. For regular curves, the notion of tangency is well defined. However, classical definitions of envelopes become inadequate for singular plane curves, since tangents are not well defined at singular points. To address this issue, the first author introduced a one-parameter family of Legendre curves in the unit tangent bundle over the Euclidean plane and proposed a definition of an envelope for such families in \cite{Takahashi}. Studying Legendre curves is a typical approach to investigating singular curves \cite{fukunagatakahashi}. It was shown that the envelope of a Legendre curve family is also a Legendre curve.

Lorentz-Minkowski spacetime provides the mathematical setting for the theory of relativity. In the Lorentz-Minkowski plane, non-lightlike curves that consist of only one type of point, namely spacelike or timelike points, can be studied using the Frenet frame similarly to the Euclidean case. However, this frame fails for mixed type curves, which contain points of different types. In \cite{izumiya}, the first author et al. introduced a lightcone frame for mixed type curves in the Lorentz-Minkowski plane, enabling the study of their evolutes. In \cite{CT,LP}, this approach was extended to mixed type curves in Minkowski 3-space and the corresponding fundamental theorem was established.
For mixed type surfaces, a Gauss-Bonnet type formula has been investigated in \cite{HST}.

In this paper, we aim to clarify the definition of envelopes for curves in the Lorentz-Minkowski plane that may admit singular points as well as non-lightlike and lightlike points. Lightlike points occur when a curve transitions between spacelike and timelike regions; moreover, closed regular curves in the Lorentz-Minkowski plane necessarily possess at least four such points. Consequently, a global moving frame cannot be defined in the same way as for Legendre curves. To address this, we introduce a lightcone frame and the lightlike tangential data for one-parameter families of curves, and propose a new definition of envelopes for such families. These one-parameter families generalize both families of mixed type curves and families of frontals in the Lorentz-Minkowski plane.

Section \ref{S2} briefly reviews the basic concepts of the Lorentz-Minkowski plane and the classical definition of envelopes given by implicit functions \cite{BG1,GAS}. Section \ref{section3} introduces one-parameter families of curves in the Lorentz-Minkowski plane, defines the associated lightlike tangential data for such families, and establishes an existence and uniqueness theorem. Section \ref{section4} proposes a new definition of envelopes for one-parameter families of curves. An equivalent condition for such envelopes is given in terms of the lightlike tangential data. Moreover, we clarify the relationship between the new envelopes and classical envelopes. Finally, Section \ref{Section5} investigates the contact between the envelope and the members of the curve family at the singular points of the envelope.

All manifolds and maps considered in this paper are differentiable of class $C^{\infty}$.

\section{Preliminaries}\label{S2}
Let $\mathbb{R}^2=\{(x,y)|x,y\in\mathbb{R}\}$ be the two-dimensional real vector space. The {\it Lorentz-Minkowski plane} $\mathbb{R}_1^2$ is $\mathbb{R}^2$ endowed with the pseudo-scalar product
$$\langle\cdot, \cdot\rangle=-dx^2+dy^2.$$
A non-zero vector $\bm{a}=(a_1,a_2)\in\mathbb{R}_1^2$ is said to be {\it timelike, lightlike} or {\it spacelike} if $\langle\bm{a},\bm{a}\rangle=-a_1^2+a_2^2$ is negative, zero, or positive, respectively.
%The zero vector is usually considered as a spacelike vector.
%Let $\lambda\in\mathbb{R}_+$ be a positive real number. Three types of pseudo-circles are defined as follows:
%\begin{align*}
%H^1(\bm{a},-\lambda)=&\{\bm{x}\in\mathbb{R}_1^2\mid\langle\bm{x}-\bm{a},\bm{x}-\bm{a}\rangle=-\lambda^2\},\\
%LC(\bm{a},0)=&\{\bm{x}\in\mathbb{R}_1^2\mid\langle\bm{x}-\bm{a},\bm{x}-\bm{a}\rangle=0\},\\
%S^1_1(\bm{a},\lambda)=&\{\bm{x}\in\mathbb{R}_1^2\mid\langle\bm{x}-\bm{a},\bm{x}-\bm{a}\rangle=\lambda^2\}.
%\end{align*}
%
Let $\widetilde{\bm{\gamma}}:I\rightarrow\mathbb{R}_1^2$ be a smooth curve, where $I$ is an interval of $\mathbb{R}$. A point $t_0\in I$ (or $\widetilde{\bm{\gamma}}(t_0)$) is called a {\it singular point} of $\widetilde{\bm{\gamma}}$ if $\dot{\widetilde{\bm{\gamma}}}(t_0)=(d\widetilde{\bm{\gamma}}/dt)(t_0)=\bm{0}$.  We say that $t_0\in I$ (or $\widetilde{\bm{\gamma}}(t_0)$) is {\it timelike, lightlike,} or {\it spacelike} if $\dot{\widetilde{\bm{\gamma}}}(t_0)$ is a timelike, lightlike, or spacelike vector, respectively. Moreover, the curve
$\widetilde{\bm{\gamma}}$ is said to be {\it timelike, lightlike} or {\it spacelike} if $\dot{\widetilde{\bm{\gamma}}}(t)$ is of the corresponding type for all $t\in I$. However, a smooth curve in the Lorentz-Minkowski plane may not belong to any of these three categories. We call $\widetilde{\bm{\gamma}}$ a {\it mixed type curve} if it contains points of different types among timelike, lightlike, or spacelike points.  More often than not, a smooth curve in $\mathbb{R}_1^2$ is a mixed type curve. For instance, in \cite{izumiya}, the astroid curve $\widetilde{\bm{\gamma}}:[0,2\pi)\rightarrow\mathbb{R}_1^2$ defined by
\begin{align*}
\widetilde{\bm{\gamma}}(t)=\left(2\cos^3t,2\sin^3t\right),
\end{align*}
was shown to be the evolute of the unit Euclidean circle in $\mathbb{R}_1^2$ (see Fig. \ref{figure_1}).
\begin{figure}[h]
\begin{center}
\includegraphics[width=5cm]
{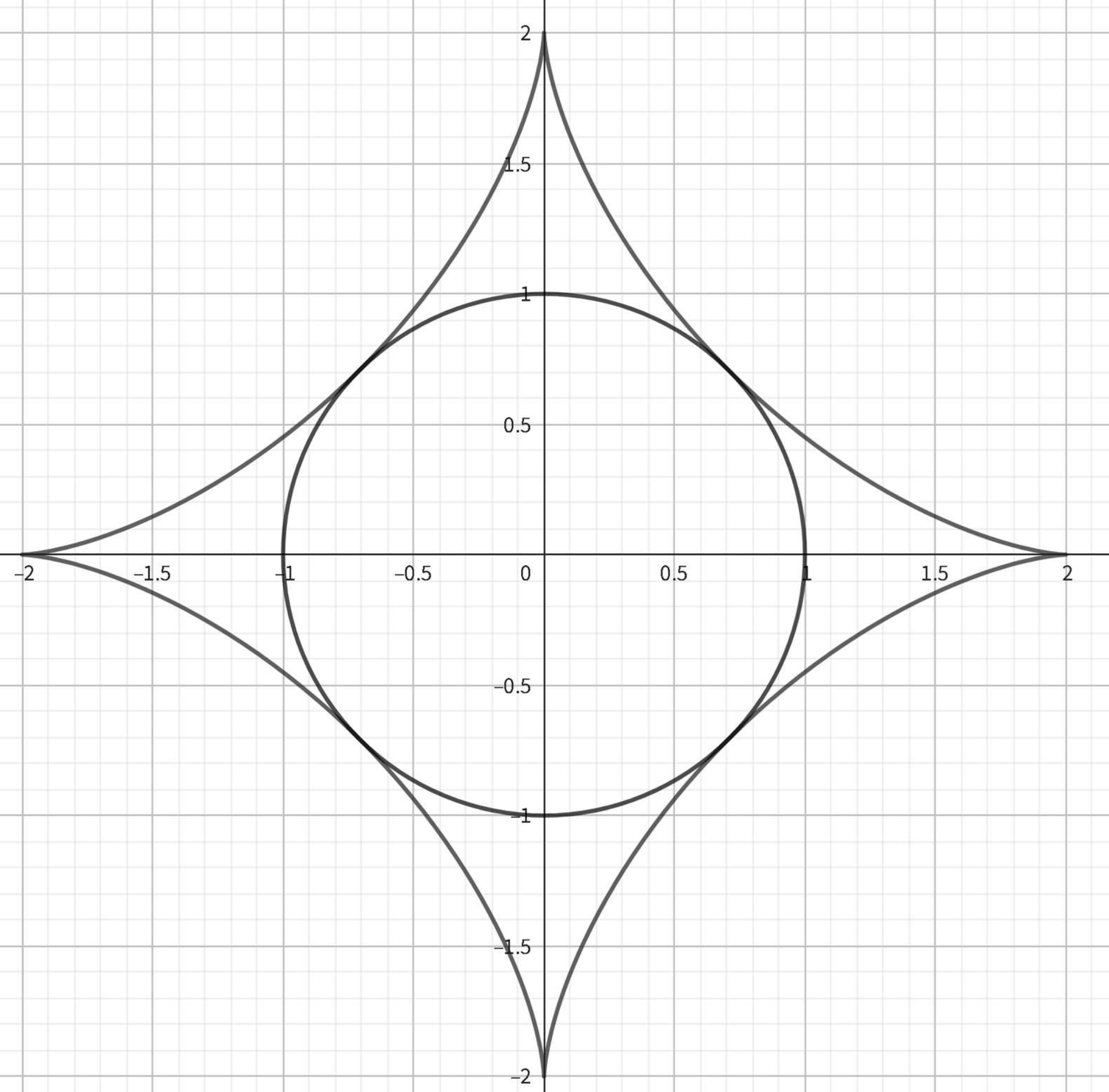}

\caption{The evolute of a circle is an astroid curve in $\mathbb{R}_1^2$.
}
\label{figure_1}
\end{center}
\end{figure}
It is worth noting that the evolute of the unit circle in $\mathbb{R}^2$ is a single point. A direct computation gives $$\langle\dot{\widetilde{\bm{\gamma}}}(t),\dot{\widetilde{\bm{\gamma}}}(t)\rangle=-\frac{3}{2}\sin4t.$$ Hence, $\widetilde{\bm{\gamma}}$ contains timelike, lightlike, and spacelike points. Moreover, $t=0,\pi/2,\pi,3\pi/2$ are singular points of $\widetilde{\bm{\gamma}}$.

We recall the classical definition of envelopes for one-parameter families of curves in $\mathbb{R}_1^2$ given by implicit functions. Let $F:V\times\Lambda\rightarrow\mathbb{R},$ $(x,y,\lambda)\mapsto F(x,y,\lambda)$ be a smooth function, where $V\subset\mathbb{R}^2_1$ is a domain and $\Lambda\subset\mathbb{R}$ is an interval. For each $\lambda\in\Lambda$, a family of curves in $\mathbb{R}_1^2$ is defined by
$$\Gamma_\lambda=\{(x,y)\in V\mid F(x,y,\lambda)=0\}.$$
The classical definition of the envelope is as follows.

\begin{definition}[$\mathcal{D}$ envelope \cite{brucegiblin}]\label{classical definition}
{\rm
The following set is called
the \textit{$\mathcal{D}$ envelope} of the family
$\Gamma_\lambda$ and is denoted by
\[\mathcal{D}=
\left\{(x, y)\in V\, \mid
\exists\lambda\in\Lambda\mbox{ such that } F(x, y, \lambda)=\frac{\partial F}{\partial \lambda}(x, y, \lambda)=0
\right\}.
\]
}
\end{definition}

We consider an example of families of astroid curves in the Lorentz-Minkowski plane.
\begin{example}\label{example astroid}
Let $F:\mathbb{R}^2_1\times(0,2\pi]\rightarrow\mathbb{R}$ be defined by $$F(x,y,\lambda)=(x\cos\lambda+y\sin\lambda)^{\frac{2}{3}}+(-x\sin\lambda+y\cos\lambda)^{\frac{2}{3}}-2^{\frac{2}{3}}.$$
A family of astroid curves in $\mathbb{R}_1^2$ is given by
$$\Gamma_\lambda=\{(x,y)\in \mathbb{R}^2_1|F(x,y,\lambda)=0\}.$$
We then have
\begin{align*}
\mathcal{D} = &
\left\{(x, y)\in \mathbb{R}^2_1\: \left|\: \exists \lambda\in (0,2\pi] \mbox{ s.t. }
F(x, y, \lambda)=\frac{\partial F}{\partial \lambda}(x, y, \lambda)=0\right.\right\} \\
= &
\left\{(x, y)\in \mathbb{R}^2_1\: \left|\: (x\cos\lambda+y\sin\lambda)(-x\sin\lambda+y\cos\lambda)\neq0,~
F(x, y, \lambda)=\frac{\partial F}{\partial \lambda}(x, y, \lambda)=0\right.\right\} \\ &\cup
\left\{(x, y)\in \mathbb{R}^2_1\: \left|\: (x\cos\lambda+y\sin\lambda)(-x\sin\lambda+y\cos\lambda)=0,~
F(x, y, \lambda)=\frac{\partial F}{\partial \lambda}(x, y, \lambda)=0\right.\right\} \\
 = &
\left\{(x, y)\in \mathbb{R}^2_1\: \left|\: x^2\cos^2\lambda+y^2\sin^2\lambda+2xy\cos\lambda\sin\lambda=
x^2\sin^2\lambda+y^2\cos^2\lambda-2xy\cos\lambda\sin\lambda=\frac{1}{2}\right.\right\} \\
  & \cup
\left\{(x, y)\in \mathbb{R}^2_1\: \left|\: x=2\cos\lambda, y=2\sin\lambda \mbox{ or } x=-2\sin\lambda, y=2\cos\lambda\right.\right\} \\
= & \left\{(x, y)\in \mathbb{R}^2_1\:
\left|\: x^2+y^2=1 \mbox{ or } x^2+y^2=4\right.\right\}.
\end{align*}
The $\mathcal{D}$ envelope of the family of astroid curves consists of two circles, as shown in Fig. \ref{figure_example2}.
\end{example}

\begin{figure}[h]
\begin{center}
\includegraphics[width=6cm]
{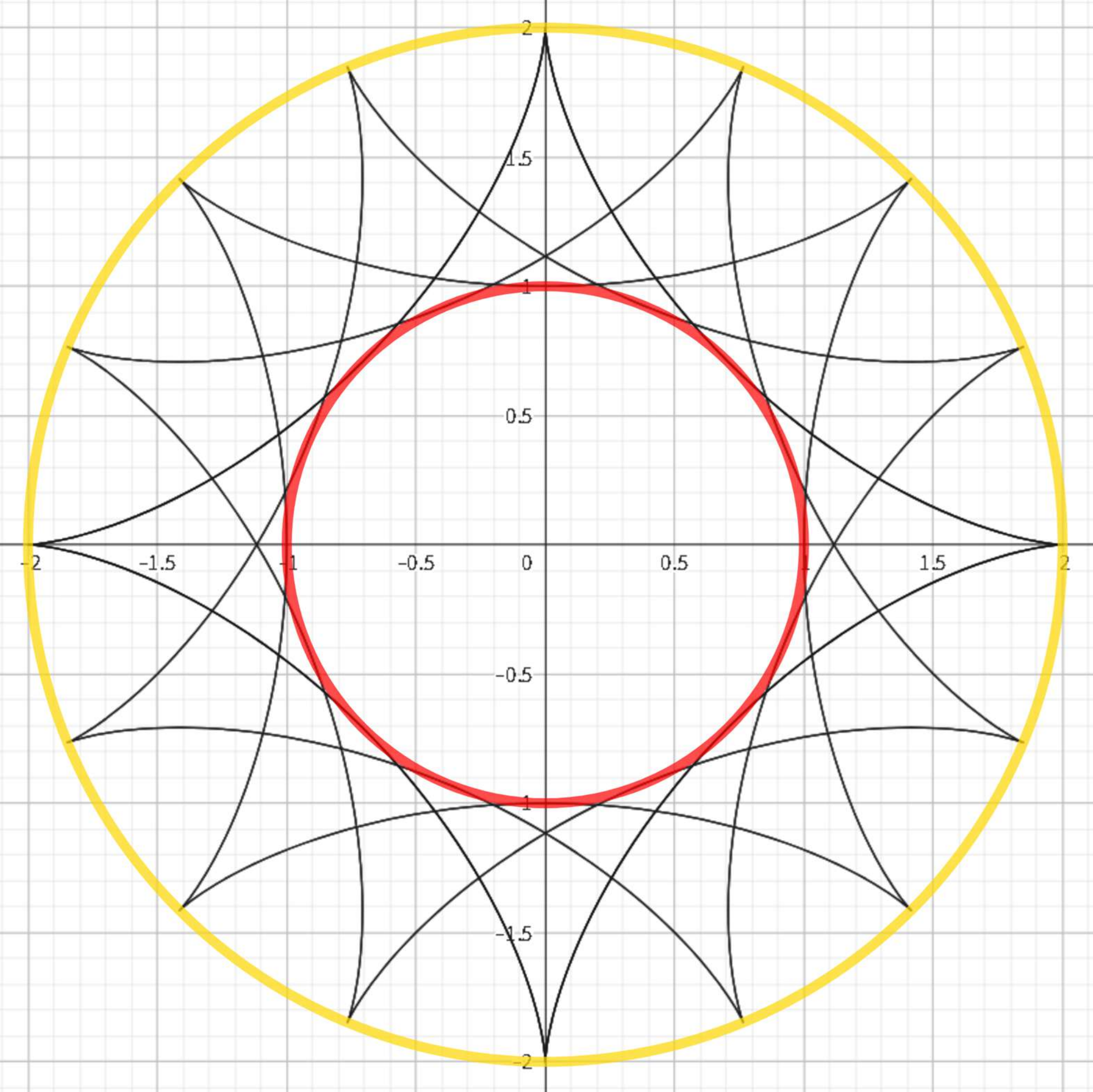}

\caption{One-parameter family of astroid curves
and the candidate of its envelope.
}
\label{figure_example2}
\end{center}
\end{figure}

\begin{problem}\label{p1}
In Example \ref{example astroid}, in the sense of limit tangent at the singularities of the astroid curves, the circle $\{(x,y)\in\mathbb{R}^2_1\mid x^2+y^2=4\}$  is not tangent to the astroid curves. Therefore, it is necessary to clarify the definition of envelopes for curves in the Lorentz-Minkowski plane
that may admit singular points as well as non-lightlike and lightlike points.
\end{problem}

\section{One-parameter families of curves in the Lorentz-Minkowski plane\label{section3}}
In this section, we consider one-parameter families of curves in the Lorentz-Minkowski plane. Let $\mathbb{L}^+=(1,1)$ and $\mathbb{L}^-=(1,-1)$. It is clear  that $\langle\mathbb{L}^+,\mathbb{L}^-\rangle=-2$, and $\mathbb{L}^+$, $\mathbb{L}^-$ are two linearly independent lightlike vectors.
Let $\widetilde{\bm{\gamma}}:I\rightarrow\mathbb{R}_1^2$ be a smooth curve. There exists a smooth mapping
$(\alpha,\beta):I\rightarrow\mathbb{R}^2$ such that
\[\dot{\widetilde{\bm{\gamma}}}(t)=\alpha(t)\mathbb{L}^++\beta(t)\mathbb{L}^-.\]
We call $\{\mathbb{L}^+,\mathbb{L}^-\}$ a {\it lightcone frame}, and the pair $(\alpha,\beta)$ the {\it lightlike tangential
 data} of $\widetilde{\bm{\gamma}}$. By definition, we have $$\langle\dot{\widetilde{\bm{\gamma}}}(t),\dot{\widetilde{\bm{\gamma}}}(t)\rangle=-4\alpha(t)\beta(t).$$ Hence, a regular point $\widetilde{\bm{\gamma}}(t_0)$ is timelike, lightlike, or spacelike if $\alpha(t_0)\beta(t_0)$ is positive, zero, or negative, respectively. A point $\widetilde{\bm{\gamma}}(t_0)$ is  singular  if $\alpha(t_0)=\beta(t_0)=0.$

Let $\bm{\gamma}: I\times\Lambda\to \mathbb{R}^2_1$ be a smooth mapping. If $\bm{\gamma}(\cdot,\lambda): I\to \mathbb{R}^2_1$ is a mixed type curve for each fixed $\lambda\in\Lambda$,
%i.e., $\bm{\gamma}(\cdot,\lambda)$ is an integrable curve in $\mathbb{R}^2_1$,
then $\bm{\gamma}$ is called a
{\it one-parameter family of mixed type curves}.
If $\bm{\gamma}(\cdot,\lambda): I\to \mathbb{R}^2_1$ is a singular curve for each fixed $\lambda\in\Lambda$,
then $\bm{\gamma}$ is called a
{\it one-parameter family of singular curves}.
There exists a smooth mapping $(\alpha,\beta,m,n):I\times\Lambda\rightarrow\mathbb{R}^4$ such that
\begin{align*}
\bm{\gamma}_t(t,\lambda)=\alpha(t,\lambda)\mathbb{L}^++\beta(t,\lambda)\mathbb{L}^-, \quad
\bm{\gamma}_\lambda(t,\lambda)&=m(t,\lambda)\mathbb{L}^++n(t,\lambda)\mathbb{L}^-,
\end{align*}
where $\bm{\gamma}_t(t,\lambda)=(\partial\bm{\gamma}/\partial t)(t,\lambda)$ and $\bm{\gamma}_\lambda(t,\lambda)=(\partial\bm{\gamma}/\partial\lambda)(t,\lambda)$.
From the integrability condition $\bm{\gamma}_{t\lambda}(t,\lambda)=\bm{\gamma}_{\lambda t}(t,\lambda)$, the functions $\alpha$, $\beta$, $m$, $n$ satisfy
\begin{align}\label{ingtegrability}
\alpha_\lambda(t,\lambda)=m_t(t,\lambda), \quad \beta_\lambda(t,\lambda)=n_t(t,\lambda)
\end{align}
for all $(t,\lambda)\in I\times\Lambda$.
We call the quadruple $(\alpha,\beta,m,n)$ with the integrability condition (\ref{ingtegrability})
the {\it lightlike tangential data} of the one-parameter family of curves $\bm{\gamma}$.

\begin{example}\label{example astroid1} (Example \ref{example astroid} revisited).
Let $\bm{\gamma}: [0,2\pi)\times[0,2\pi)\to \mathbb{R}^2_1$ be a one-parameter family of astroid curves defined by $$\bm{\gamma}(t,\lambda)=(2\cos\lambda\cos^3t-2\sin\lambda\sin^3t,2\sin\lambda\cos^3t+2\cos\lambda\sin^3t).$$
Since
\begin{align*}
\bm{\gamma}_t(t,\lambda)&=6\sin t\cos t\left(-\cos\lambda\cos t-\sin\lambda\sin t,-\sin\lambda\cos t+\cos\lambda\sin t\right),\\
\bm{\gamma}_\lambda(t,\lambda)&=2\left(-\sin\lambda\cos^3t-\cos\lambda\sin^3 t,\cos\lambda\cos^3 t-\sin\lambda\sin^3t\right),
\end{align*}
we obtain the expressions in terms of the lightcone frame $\{\mathbb{L}^+,\mathbb{L}^-\}$:
\begin{align*}
\bm{\gamma}_t(t,\lambda)=&\alpha(t,\lambda)\mathbb{L}^++\beta(t,\lambda)\mathbb{L}^-\\
=&3\sin t\cos t\left[-(\cos\lambda+\sin\lambda)\cos t+(\cos\lambda-\sin\lambda)\sin t\right]\mathbb{L}^+\\
&+3\sin t\cos t\left[(-\cos\lambda+\sin\lambda)\cos t-(\cos\lambda+\sin\lambda)\sin t\right]\mathbb{L}^-,\\
\bm{\gamma}_\lambda(t,\lambda)=&m(t,\lambda)\mathbb{L}^++n(t,\lambda)\mathbb{L}^-\\
=&\left[(\cos\lambda-\sin\lambda)\cos^3 t-(\cos\lambda+\sin\lambda)\sin^3 t\right]\mathbb{L}^+\\
&+\left[-(\cos\lambda+\sin\lambda)\cos^3 t-(\cos\lambda-\sin\lambda)\sin^3 t\right]\mathbb{L}^-.
\end{align*}
It follows that
\begin{align*}
\alpha(t,\lambda)&=3\sin t\cos t\left[-(\cos\lambda+\sin\lambda)\cos t+(\cos\lambda-\sin\lambda)\sin t\right],\\
\beta(t,\lambda)&=3\sin t\cos t\left[(-\cos\lambda+\sin\lambda)\cos t-(\cos\lambda+\sin\lambda)\sin t\right],\\
m(t,\lambda)&=(\cos\lambda-\sin\lambda)\cos^3 t-(\cos\lambda+\sin\lambda)\sin^3 t,\\
n(t,\lambda)&=-(\cos\lambda+\sin\lambda)\cos^3 t-(\cos\lambda-\sin\lambda)\sin^3 t,
\end{align*}
and
\begin{align*}
&\alpha_\lambda(t,\lambda)=m_t(t,\lambda)=3\sin t\cos t\left[(\sin\lambda-\cos\lambda)\cos t-(\sin\lambda+\cos\lambda)\sin t\right],\\
&\beta_\lambda(t,\lambda)=n_t(t,\lambda)=3\sin t\cos t\left[(\sin\lambda+\cos\lambda)\cos t+(\sin\lambda-\cos\lambda)\sin t\right].
\end{align*}
Thus the quadruple $(\alpha,\beta,m,n)$ is the lightlike tangential data of $\bm{\gamma}$. Moreover, we calculate that
\begin{align*}
\alpha(t,\lambda)\beta(t,\lambda)=\frac{9}{8}(1-\cos4t)\cos2(t-\lambda).
\end{align*}
For any fixed $\lambda\in[0,2\pi)$, we have $\alpha(t,\lambda)=\beta(t,\lambda)=0$ when $t=0,\pi/2,\pi,3\pi/2$; consequently,
$\bm{\gamma}$ is a one-parameter family of singular
curves. The sign of $\alpha(t,\lambda)\beta(t,\lambda)$ changes around  those $t\in[0,2\pi)$ where $\cos2(t-\lambda)=0;$ hence, $\bm{\gamma}$ is a one-parameter family of mixed type curves. Note that when $\lambda=\pi/4,3\pi/4,5\pi/4,7\pi/4$, although $\bm{\gamma}(\cdot,\lambda)$ is a closed curve, it contains no lightlike points.
\end{example}

\begin{example}\label{example closed curves}
Let $\bm{\gamma}: [0,2\pi)\times[0,2\pi)\to \mathbb{R}^2_1$ be a family of closed curves defined by
\begin{align*}
\bm{\gamma}(t,\lambda)=\left(\frac{1}{3}\cos\lambda\cos^3t+\frac{1}{3}\sin\lambda\sin^3t-\sin\lambda\sin t,~ \frac{1}{3}\sin\lambda\cos^3t-\frac{1}{3}\cos\lambda\sin^3t+\cos\lambda\sin t\right),
\end{align*}
(see Fig. \ref{figure_example3}).
The partial derivatives are
\begin{align*}
\bm{\gamma}_t(t,\lambda)=(&-\cos\lambda\cos^2t\sin t+\sin\lambda\sin^2t\cos t-\sin\lambda\cos t,~ -\sin\lambda\cos^2t\sin t-\cos\lambda\sin^2t\cos t
\\&+\cos\lambda\cos t ),\\
\bm{\gamma}_\lambda(t,\lambda)=\bigg(&-\frac{1}{3}\sin\lambda\cos^3t+\frac{1}{3}\cos\lambda\sin^3t-\cos\lambda\sin t,~ \frac{1}{3}\cos\lambda\cos^3t+\frac{1}{3}\sin\lambda\sin^3t-\sin\lambda\sin t\bigg).
\end{align*}
We then obtain
\begin{align*}
\bm{\gamma}_t(t,\lambda)=&\alpha(t,\lambda)\mathbb{L}^++\beta(t,\lambda)\mathbb{L}^-\\
=&-\frac{1}{2}\cos^2t\left[\cos\lambda(\sin t-\cos t)+\sin\lambda(\sin t+\cos t)\right]\mathbb{L}^+\\
&+\frac{1}{2}\cos^2t\left[\sin\lambda(\sin t-\cos t)-\cos\lambda(\sin t+\cos t)\right]\mathbb{L}^-,\\
\bm{\gamma}_\lambda(t,\lambda)=&m(t,\lambda)\mathbb{L}^++n(t,\lambda)\mathbb{L}^-\\
=&\left[\frac{1}{6}(\cos\lambda-\sin\lambda)\cos^3t-\frac{1}{2}(\cos\lambda+\sin\lambda)\sin t+\frac{1}{6}(\cos\lambda+\sin\lambda)\sin^3t\right]\mathbb{L}^+\\
&+\left[-\frac{1}{6}(\cos\lambda+\sin\lambda)\cos^3t-\frac{1}{2}(\cos\lambda-\sin\lambda)\sin t+\frac{1}{6}(\cos\lambda-\sin\lambda)\sin^3t\right]\mathbb{L}^-.
\end{align*}
It follows that
\begin{align*}
\alpha(t,\lambda)&=-\frac{1}{2}\cos^2t\left[\cos\lambda(\sin t-\cos t)+\sin\lambda(\sin t+\cos t)\right],\\
\beta(t,\lambda)&=\frac{1}{2}\cos^2t\left[\sin\lambda(\sin t-\cos t)-\cos\lambda(\sin t+\cos t)\right],\\
m(t,\lambda)&=\frac{1}{6}(\cos\lambda-\sin\lambda)\cos^3t-\frac{1}{2}(\cos\lambda+\sin\lambda)\sin t+\frac{1}{6}(\cos\lambda+\sin\lambda)\sin^3t,\\
n(t,\lambda)&=-\frac{1}{6}(\cos\lambda+\sin\lambda)\cos^3t-\frac{1}{2}(\cos\lambda-\sin\lambda)\sin t+\frac{1}{6}(\cos\lambda-\sin\lambda)\sin^3t,
\end{align*}
and
\begin{align*}
&\alpha_\lambda(t,\lambda)=m_t(t,\lambda)=\frac{1}{2}\cos^2t\left[-\cos\lambda(\sin t+\cos t)+\sin\lambda(\sin t-\cos t)\right],\\
&\beta_\lambda(t,\lambda)=n_t(t,\lambda)=\frac{1}{2}\cos^2t\left[\cos\lambda(\sin t-\cos t)+\sin\lambda(\sin t+\cos t)\right].
\end{align*}
Thus the quadruple $(\alpha,\beta,m,n)$ is the lightlike tangential data of $\bm{\gamma}$. Moreover, we calculate that
\begin{align*}
\alpha(t,\lambda)\beta(t,\lambda)=-\frac{1}{4}\cos^4t\cos2(t+\lambda).
\end{align*}
For any fixed $\lambda\in[0,2\pi)$, we have $\alpha(t,\lambda)=\beta(t,\lambda)=0$ when $t=\pi/2,3\pi/2$; consequently,
$\bm{\gamma}$ is a one-parameter family of singular
curves. The sign of $\alpha(t,\lambda)\beta(t,\lambda)$ changes around  those $t\in[0,2\pi)$ where $\cos2(t+\lambda)=0;$ hence, $\bm{\gamma}$ is a one-parameter family of mixed type curves. Note that for any fixed $\lambda\in[0,2\pi)$, $\bm{\gamma}(\cdot,\lambda)$ is a closed curve contains lightlike points.
\begin{figure}[h]
\begin{center}
\includegraphics[width=6cm]
{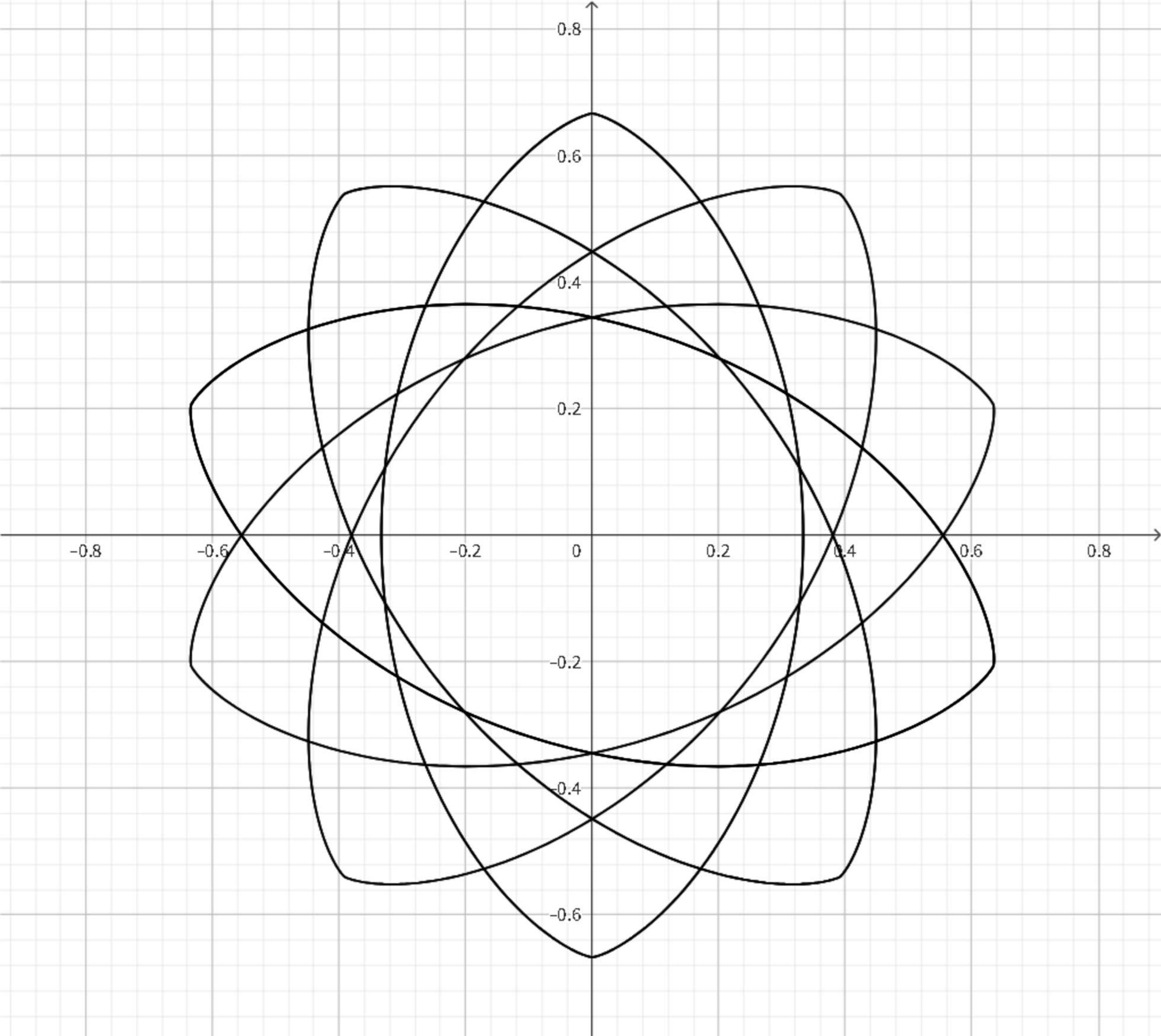}

\caption{One-parameter family of closed curves in $\mathbb{R}_1^2$.
}
\label{figure_example3}
\end{center}
\end{figure}

\end{example}

We now establish the existence and uniqueness theorems for one-parameter families of curves with given lightlike tangential data $(\alpha,\beta,m,n)$.

\begin{theorem}\label{theorem11}
(Existence Theorem) Let $(\alpha,\beta,m,n):I\times\Lambda\rightarrow\mathbb{R}^4$ be a smooth mapping satisfying the integrability condition (\ref{ingtegrability}). Then there exists a one-parameter family of curves $\bm{\gamma}: I\times\Lambda\to \mathbb{R}^2_1$ whose lightlike tangential data is $(\alpha,\beta,m,n)$.
\end{theorem}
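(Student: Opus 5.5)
The plan is to reduce the statement to the Poincaré lemma for a closed $1$-form on the simply connected domain $I\times\Lambda$, where $I$ and $\Lambda$ are intervals. Since $\{\mathbb{L}^+,\mathbb{L}^-\}$ is a fixed basis of $\mathbb{R}^2_1$, it suffices to find smooth functions $a,b:I\times\Lambda\to\mathbb{R}$ with
\begin{align*}
a_t=\alpha,\quad a_\lambda=m,\quad b_t=\beta,\quad b_\lambda=n.
\end{align*}
We then set $\bm{\gamma}(t,\lambda)=a(t,\lambda)\mathbb{L}^++b(t,\lambda)\mathbb{L}^-$. By construction $\bm{\gamma}_t=\alpha\mathbb{L}^++\beta\mathbb{L}^-$ and $\bm{\gamma}_\lambda=m\mathbb{L}^++n\mathbb{L}^-$, so $(\alpha,\beta,m,n)$ is the lightlike tangential data of $\bm{\gamma}$.

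To construct $a$ explicitly, fix a base point $(t_0,\lambda_0)\in I\times\Lambda$ and define
\begin{align*}
a(t,\lambda)=\int_{\lambda_0}^{\lambda} m(t_0,\mu)\,d\mu+\int_{t_0}^{t}\alpha(s,\lambda)\,ds .
\end{align*}
This is smooth, since the integrands are smooth and differentiation under the integral sign is permitted on compact subintervals. The fundamental theorem of calculus gives $a_t=\alpha$ immediately. For the other derivative we differentiate under the integral sign and then use the integrability condition (\ref{ingtegrability}):
\begin{align*}
a_\lambda(t,\lambda)=m(t_0,\lambda)+\int_{t_0}^{t}\alpha_\lambda(s,\lambda)\,ds=m(t_0,\lambda)+\int_{t_0}^{t}m_t(s,\lambda)\,ds=m(t,\lambda).
\end{align*}
The same construction, with $(\beta,n)$ in place of $(\alpha,m)$, produces $b$ with $b_t=\beta$ and $b_\lambda=n$, now using $\beta_\lambda=n_t$.

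The only point needing care is the step $a_\lambda=m$: it relies on differentiation under the integral sign, which holds because $\alpha$ is $C^\infty$, and on the domain being a product of intervals, hence convex. Convexity guarantees that the segment from $t_0$ to $t$ lies in $I$ for every $\lambda$, so the two-step integration path is always defined. Everything else is routine.

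Finally, the construction is unique up to a constant: any other solution differs from $\bm{\gamma}$ by a constant vector, which corresponds to the choice of $(t_0,\lambda_0)$ and of the initial value $\bm{\gamma}(t_0,\lambda_0)$. This is consistent with the uniqueness theorem that presumably follows in the paper.
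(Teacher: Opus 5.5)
Your proof is correct and is essentially the paper's own argument. The paper writes $\bm{\gamma}$ componentwise as $(a+b,\,a-b)=a\mathbb{L}^++b\mathbb{L}^-$, with exactly your $a$ and $b$: integrate in $t$ from $t_0$, and integrate in $\lambda$ along $t=t_0$. It then verifies $\bm{\gamma}_\lambda=m\mathbb{L}^++n\mathbb{L}^-$ by differentiating under the integral sign and applying the integrability condition, just as you do.
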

\begin{proof}
Let $(t_0,\lambda_0)\in I\times\Lambda$ be fixed. Define $\bm{\gamma}: I\times\Lambda\to \mathbb{R}^2_1$  by
\begin{align*}
\bm{\gamma}(t,\lambda)=\bigg(&\int_{t_0}^{t}(\alpha(t,\lambda)+\beta(t,\lambda))dt+\int_{\lambda_0}^{\lambda}(m(t_0,\lambda)+n(t_0,\lambda))d\lambda,\\
&\int_{t_0}^{t}(\alpha(t,\lambda)-\beta(t,\lambda))dt+\int_{\lambda_0}^{\lambda}(m(t_0,\lambda)-n(t_0,\lambda))d\lambda\bigg).
\end{align*}
A direct calculation yields
\begin{align*}
\bm{\gamma}_t(t,\lambda)=\left(\alpha(t,\lambda)+\beta(t,\lambda),\alpha(t,\lambda)-\beta(t,\lambda)\right)
=\alpha(t,\lambda)\mathbb{L}^++\beta(t,\lambda)\mathbb{L}^-.
\end{align*}
Moreover, by the integrability condition (\ref{ingtegrability}), we obtain
\begin{align*}
\bm{\gamma}_\lambda(t,\lambda)=&\bigg(\int_{t_0}^{t}(\alpha_\lambda(t,\lambda)+\beta_\lambda(t,\lambda))dt+m(t_0,\lambda)+n(t_0,\lambda),\\
&\int_{t_0}^{t}(\alpha_\lambda(t,\lambda)-\beta_\lambda(t,\lambda))dt+m(t_0,\lambda)-n(t_0,\lambda)\bigg)\\
=&(m(t,\lambda)+n(t,\lambda),m(t,\lambda)-n(t,\lambda))\\
=&m(t,\lambda)\mathbb{L}^++n(t,\lambda)\mathbb{L}^-.
\end{align*}
Hence, $\bm{\gamma}$ is a one-parameter family of curves with lightlike tangential data $(\alpha,\beta,m,n)$.
\end{proof}

\begin{proposition}\label{prop1}
Let $\bm{\gamma},\overline{\bm{\gamma}}: I\times\Lambda\to \mathbb{R}^2_1$ be one-parameter families of curves with the same lightlike tangential data $(\alpha,\beta,m,n)$. Then there exists a constant $\bm{c}\in\mathbb{R}^2_1$ such that
$\overline{\bm{\gamma}}(t,\lambda)=\bm{\gamma}(t,\lambda)+\bm{c}$.
\end{proposition}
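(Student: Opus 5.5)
The plan is to study the difference $\bm{\delta}(t,\lambda)=\overline{\bm{\gamma}}(t,\lambda)-\bm{\gamma}(t,\lambda)$ and show that both of its partial derivatives vanish identically on $I\times\Lambda$. Since both families have the same lightlike tangential data $(\alpha,\beta,m,n)$, the defining relations give
\begin{align*}
\bm{\delta}_t(t,\lambda)&=\big(\alpha\mathbb{L}^++\beta\mathbb{L}^-\big)-\big(\alpha\mathbb{L}^++\beta\mathbb{L}^-\big)=\bm{0},\\
\bm{\delta}_\lambda(t,\lambda)&=\big(m\mathbb{L}^++n\mathbb{L}^-\big)-\big(m\mathbb{L}^++n\mathbb{L}^-\big)=\bm{0}
\end{align*}
for all $(t,\lambda)\in I\times\Lambda$.

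The conclusion then follows because $I\times\Lambda$ is connected; indeed, as a product of intervals it is convex. Fix $(t_0,\lambda_0)$ and join any $(t,\lambda)$ to it by the segment $s\mapsto (t_0+s(t-t_0),\lambda_0+s(\lambda-\lambda_0))$, which stays inside $I\times\Lambda$. Along this segment the derivative of $\bm{\delta}$ is $(t-t_0)\bm{\delta}_t+(\lambda-\lambda_0)\bm{\delta}_\lambda=\bm{0}$. Hence $\bm{\delta}(t,\lambda)=\bm{\delta}(t_0,\lambda_0)=:\bm{c}$, which gives $\overline{\bm{\gamma}}=\bm{\gamma}+\bm{c}$.

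There is no real obstacle. The only point to check is that the domain is connected, which holds because $I$ and $\Lambda$ are intervals. Working componentwise is legitimate since $\{\mathbb{L}^+,\mathbb{L}^-\}$ is a fixed (constant) basis. Note that the integrability condition (\ref{ingtegrability}) is not needed for uniqueness; it is only needed for existence, as in Theorem \ref{theorem11}.
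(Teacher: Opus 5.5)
Your proof is correct and follows the paper's argument: both show that the partial derivatives of $\overline{\bm{\gamma}}-\bm{\gamma}$ vanish identically and conclude that the difference is constant. You also use the convexity of $I\times\Lambda$ to justify that last step, which the paper leaves implicit.
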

\begin{proof}
A direct calculation shows that
\begin{align*}\frac{\partial}{\partial t}(\overline{\bm{\gamma}}(t,\lambda)-\bm{\gamma}(t,\lambda))=0, \quad \frac{\partial}{\partial \lambda}(\overline{\bm{\gamma}}(t,\lambda)-\bm{\gamma}(t,\lambda))=0.\end{align*}
Hence, $\overline{\bm{\gamma}}(t,\lambda)-\bm{\gamma}(t,\lambda)$ is constant, i.e., there exists $\bm{c}\in\mathbb{R}^2_1$ such that
$\overline{\bm{\gamma}}(t,\lambda)=\bm{\gamma}(t,\lambda)+\bm{c}$.
\end{proof}
\begin{remark}
The condition in Proposition \ref{prop1} is rather strong. We adopt a milder alternative to ensure uniqueness up to a Lorentz motion.
\end{remark}
\begin{definition}\label{Lorentz motion}
Let $\bm{\gamma},\overline{\bm{\gamma}}: I\times\Lambda\to \mathbb{R}^2_1$ be one-parameter families of curves. If there exist a matrix
 $A$ and a constant $\bm{c}\in\mathbb{R}^2_1$ such that
$
\overline{\bm{\gamma}}(t,\lambda)=A(\bm{\gamma}(t,\lambda))+\bm{c}$ for all $(t,\lambda)\in I\times\Lambda,$
where
\begin{align*}
A=\left(\begin{array}{cc}
          \cosh a & -\sinh a \\
          -\sinh a & \cosh a
        \end{array}
\right)
~~\mbox{ or }~~
A=-\left(\begin{array}{cc}
          \cosh a & -\sinh a \\
          -\sinh a & \cosh a
        \end{array}
\right)
\end{align*}
for some $a\in\mathbb{R}$, then we say that $\bm{\gamma}$ and $\overline{\bm{\gamma}}$ are {\it congruent through a Lorentz motion}.
\end{definition}

\begin{proposition}\label{prop2}
Let $\bm{\gamma},\overline{\bm{\gamma}}: I\times\Lambda\to \mathbb{R}^2_1$ be one-parameter families of curves with lightlike tangential data $(\alpha,\beta,m,n)$ and $(\overline{\alpha},\overline{\beta},\overline{m},\overline{n})$, respectively. Suppose that $\bm{\gamma}$ and $\overline{\bm{\gamma}}$ are congruent through a Lorentz motion, i.e., there exist a constant $\bm{c}\in\mathbb{R}^2_1$ and a matrix
\begin{align*}
A=\left(\begin{array}{cc}
          \cosh a & -\sinh a \\
          -\sinh a & \cosh a
        \end{array}
\right)
~~\left( \mathrm{or} ,~~
A=-\left(\begin{array}{cc}
          \cosh a & -\sinh a \\
          -\sinh a & \cosh a
        \end{array}
\right)\right)
\end{align*}
for some $a\in\mathbb{R}$, such that $\overline{\bm{\gamma}}(t,\lambda)=\bm{\gamma}(t,\lambda)+\bm{c}$. Then
\begin{align*}
\overline{\alpha}(t,\lambda)&=e^{-a}\alpha(t,\lambda),~~\overline{\beta}(t,\lambda)=e^{a}\beta(t,\lambda),
~~\overline{m}(t,\lambda)=e^{-a}m(t,\lambda),~~\overline{n}(t,\lambda)=e^{a}n(t,\lambda)\\
( \mathrm{or} ,~ \overline{\alpha}(t,\lambda)&=-e^{-a}\alpha(t,\lambda),~~\overline{\beta}(t,\lambda)=-e^{a}\beta(t,\lambda),
~~\overline{m}(t,\lambda)=-e^{-a}m(t,\lambda),~~\overline{n}(t,\lambda)=-e^{a}n(t,\lambda)).
\end{align*}
\end{proposition}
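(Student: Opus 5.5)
Note that the statement has a typo: the hypothesis should read $\overline{\bm{\gamma}}(t,\lambda)=A(\bm{\gamma}(t,\lambda))+\bm{c}$, as in Definition \ref{Lorentz motion}. We prove it in that form. The plan is to differentiate this relation in $t$ and in $\lambda$, which kills the constant $\bm{c}$, and then express everything in the lightcone frame. Doing so gives
\[
\overline{\bm{\gamma}}_t=A\bm{\gamma}_t=\alpha\,A\mathbb{L}^++\beta\,A\mathbb{L}^-,
\qquad
\overline{\bm{\gamma}}_\lambda=A\bm{\gamma}_\lambda=m\,A\mathbb{L}^++n\,A\mathbb{L}^-.
\]
Everything therefore reduces to computing $A\mathbb{L}^\pm$.

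The key step is to show that $\mathbb{L}^+$ and $\mathbb{L}^-$ are eigenvectors of the boost. Take $A=\begin{pmatrix}\cosh a&-\sinh a\\-\sinh a&\cosh a\end{pmatrix}$. Then
\[
A\mathbb{L}^+=(\cosh a-\sinh a)(1,1)=e^{-a}\mathbb{L}^+,
\qquad
A\mathbb{L}^-=(\cosh a+\sinh a)(1,-1)=e^{a}\mathbb{L}^-.
\]
Substituting these gives
\[
\overline{\bm{\gamma}}_t=e^{-a}\alpha\,\mathbb{L}^++e^{a}\beta\,\mathbb{L}^-,
\qquad
\overline{\bm{\gamma}}_\lambda=e^{-a}m\,\mathbb{L}^++e^{a}n\,\mathbb{L}^-.
\]

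The coefficients are uniquely determined because $\mathbb{L}^+$ and $\mathbb{L}^-$ are linearly independent. Comparing with $\overline{\bm{\gamma}}_t=\overline{\alpha}\mathbb{L}^++\overline{\beta}\mathbb{L}^-$ and $\overline{\bm{\gamma}}_\lambda=\overline{m}\mathbb{L}^++\overline{n}\mathbb{L}^-$ then yields the claimed identities. For $-A$ the same computation carries an overall minus sign, which gives the second set of formulas.

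There is no real obstacle. The only points needing care are the typo in the hypothesis and the sign bookkeeping in the eigenvalue computation.
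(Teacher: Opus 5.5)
Your proof is correct and matches the paper's own argument: you differentiate $\overline{\bm{\gamma}}=A(\bm{\gamma})+\bm{c}$, use $A\mathbb{L}^+=e^{-a}\mathbb{L}^+$ and $A\mathbb{L}^-=e^{a}\mathbb{L}^-$, and compare coefficients in the lightcone frame, with an overall sign flip in the $-A$ case. You are also right that the hypothesis should read $\overline{\bm{\gamma}}(t,\lambda)=A(\bm{\gamma}(t,\lambda))+\bm{c}$; the paper's proof uses $A$ even though its statement contains the same typo.
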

\begin{proof}
Assume that $\overline{\bm{\gamma}}(t,\lambda)=\bm{\gamma}(t,\lambda)+\bm{c}$. Differentiating with respect to $t$ gives
\begin{align*}
\overline{\bm{\gamma}}_t(t,\lambda)=A(\bm{\gamma}_t(t,\lambda))
=\alpha(t,\lambda)A(\mathbb{L}^+)+\beta(t,\lambda)A(\mathbb{L}^-).
\end{align*}
Since
\begin{align*}
A(\mathbb{L}^+)=\left(\begin{array}{cc}
          \cosh a & -\sinh a \\
          -\sinh a & \cosh a
        \end{array}
\right)
\left(\begin{array}{c}
        1 \\
        1
      \end{array}
\right)=(\cosh a-\sinh a)
\left(\begin{array}{c}
        1 \\
        1
      \end{array}
\right)=e^{-a}\mathbb{L}^+,
\end{align*}
\begin{align*}
A(\mathbb{L}^-)=\left(\begin{array}{cc}
          \cosh a & -\sinh a \\
          -\sinh a & \cosh a
        \end{array}
\right)
\left(\begin{array}{c}
        1 \\
        -1
      \end{array}
\right)=(\cosh a+\sinh a)
\left(\begin{array}{c}
        1 \\
        -1
      \end{array}
\right)=e^{a}\mathbb{L}^-,
\end{align*}
we obtain
\begin{align*}
\overline{\bm{\gamma}}_t(t,\lambda)%=e^{-\theta}(\alpha(t,\lambda)\mathbb{L}^++\beta(t,\lambda)\mathbb{L}^-)
=e^{-a}\alpha(t,\lambda)\mathbb{L}^++e^{a}\beta(t,\lambda)\mathbb{L}^-.
\end{align*}
Comparing with the decomposition $\overline{\bm{\gamma}}_t(t,\lambda)=\overline{\alpha}(t,\lambda)\mathbb{L}^++\overline{\beta}(t,\lambda)\mathbb{L}^-$,
we deduce
\begin{align*}
\overline{\alpha}(t,\lambda)=e^{-a}\alpha(t,\lambda),\quad \overline{\beta}(t,\lambda)=e^{a}\beta(t,\lambda).
\end{align*}
Similarly, differentiating with respect to $\lambda$ yields
\begin{align*}
\overline{\bm{\gamma}}_\lambda(t,\lambda)=A(\bm{\gamma}_\lambda(t,\lambda))
=m(t,\lambda)A(\mathbb{L}^+)+n(t,\lambda)A(\mathbb{L}^-)
=e^{-a}m(t,\lambda)\mathbb{L}^++e^{a}n(t,\lambda)\mathbb{L}^-,
\end{align*}
and therefore
\begin{align*}
\overline{m}(t,\lambda)=e^{-a}m(t,\lambda),\quad \overline{n}(t,\lambda)=e^{a}n(t,\lambda).
\end{align*}
The other case can be proved by the same way.
\end{proof}

\begin{theorem}\label{theorem12}
(Uniqueness Theorem) Let $\bm{\gamma},\overline{\bm{\gamma}}: I\times\Lambda\to \mathbb{R}^2_1$ be one-parameter families of curves with lightlike tangential data $(\alpha,\beta,m,n)$ and $(\overline{\alpha},\overline{\beta},\overline{m},\overline{n})$, respectively, both satisfying the integrability condition (\ref{ingtegrability}). Suppose that the set $$\Sigma=\{(t,\lambda)\in I\times\Lambda|\alpha(t,\lambda)\beta(t,\lambda)\neq0~ \mathrm{and}~m(t,\lambda)n(t,\lambda)\neq0\}$$
is dense in $I\times\Lambda$. If the following equalities hold for all $(t,\lambda)\in I\times\Lambda$:
 \begin{align*}
&\alpha_t(t,\lambda)\beta(t,\lambda)-\alpha(t,\lambda)\beta_t(t,\lambda)=
\overline{\alpha}_t(t,\lambda)\overline{\beta}(t,\lambda)-\overline{\alpha}(t,\lambda)\overline{\beta}_t(t,\lambda),\\
&m_t(t,\lambda)n(t,\lambda)-m(t,\lambda)n_t(t,\lambda)=
\overline{m}_t(t,\lambda)\overline{n}(t,\lambda)-\overline{m}(t,\lambda)\overline{n}_t(t,\lambda),\\
&\alpha_\lambda(t,\lambda)\beta(t,\lambda)-\alpha(t,\lambda)\beta_\lambda(t,\lambda)=
\overline{\alpha}_\lambda(t,\lambda)\overline{\beta}(t,\lambda)-\overline{\alpha}(t,\lambda)\overline{\beta}_\lambda(t,\lambda),\\
&m_\lambda(t,\lambda)n(t,\lambda)-m(t,\lambda)n_\lambda(t,\lambda)=
\overline{m}_\lambda(t,\lambda)\overline{n}(t,\lambda)-\overline{m}(t,\lambda)\overline{n}_\lambda(t,\lambda)
 \end{align*}
 and additionally
\begin{align*}
\alpha(t,\lambda)\beta(t,\lambda)=\overline{\alpha}(t,\lambda)\overline{\beta}(t,\lambda),~~
m(t,\lambda)n(t,\lambda)=\overline{m}(t,\lambda)\overline{n}(t,\lambda),~~
\alpha(t,\lambda)\overline{m}(t,\lambda)=m(t,\lambda)\overline{\alpha}(t,\lambda),
\end{align*}
then $\bm{\gamma}$ and $\overline{\bm{\gamma}}$ are congruent through a Lorentz motion.
\end{theorem}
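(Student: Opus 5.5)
The plan is to show that the two sets of lightlike tangential data differ by a single constant Lorentz rescaling, $\overline{\alpha}=\varepsilon e^{-a}\alpha$, $\overline{\beta}=\varepsilon e^{a}\beta$, $\overline{m}=\varepsilon e^{-a}m$, $\overline{n}=\varepsilon e^{a}n$ with $\varepsilon=\pm1$. Given this, we conclude as in Proposition \ref{prop2} read backwards. Let $A_0$ be the hyperbolic rotation matrix of Definition \ref{Lorentz motion}, so that $A_0\mathbb{L}^+=e^{-a}\mathbb{L}^+$ and $A_0\mathbb{L}^-=e^{a}\mathbb{L}^-$. Then $\overline{\bm{\gamma}}_t=\varepsilon A_0\bm{\gamma}_t$ and $\overline{\bm{\gamma}}_\lambda=\varepsilon A_0\bm{\gamma}_\lambda$. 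So $\overline{\bm{\gamma}}-\varepsilon A_0\bm{\gamma}$ has vanishing partial derivatives, and the argument of Proposition \ref{prop1} gives a constant $\bm{c}$ with $\overline{\bm{\gamma}}=\varepsilon A_0\bm{\gamma}+\bm{c}$.

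To get the rescaling, first work on $\Sigma$. There $\alpha\beta=\overline{\alpha}\overline{\beta}\neq0$, so we may set $f=\overline{\alpha}/\alpha$, and then $\overline{\beta}=\beta/f$. The key identity is
\[
\alpha_t\beta-\alpha\beta_t=\alpha\beta\,\Bigl(\log\Bigl|\frac{\alpha}{\beta}\Bigr|\Bigr)_t,
\]
and likewise with $\lambda$ in place of $t$. Since $\alpha\beta=\overline{\alpha}\overline{\beta}\neq0$ on $\Sigma$, the first and third hypotheses give
\[
(\log|\alpha/\beta|)_t=(\log|\overline{\alpha}/\overline{\beta}|)_t=(\log|f^2\alpha/\beta|)_t ,
\]
and the same in $\lambda$. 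Hence $f_t=f_\lambda=0$ on $\Sigma$.

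In the same way, using $mn=\overline{m}\,\overline{n}\neq0$ and the second and fourth hypotheses, the function $g=\overline{m}/m$ satisfies $g_t=g_\lambda=0$ on $\Sigma$. The last hypothesis, $\alpha\overline{m}=m\overline{\alpha}$, says exactly that $f=g$ on $\Sigma$. So $f$ is locally constant and nonzero on $\Sigma$, and we may write $f=\varepsilon e^{-a}$ there, with $\overline{\beta}=\varepsilon e^{a}\beta$, $\overline{m}=\varepsilon e^{-a}m$ and $\overline{n}=\varepsilon e^{a}n$.

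The main obstacle is passing from $\Sigma$ to all of $I\times\Lambda$, because $\Sigma$ is only dense and may be disconnected, so a priori $f$ could take different constant values on different components. I would first work on the open set $U=\{\alpha\neq0\}\cup\{m\neq0\}$. On $\{\alpha\neq0\}$ the function $\overline{\alpha}/\alpha$ is smooth, and its derivatives vanish on the dense subset $\Sigma\cap\{\alpha\neq0\}$, hence everywhere by continuity. The same holds for $\overline{m}/m$ on $\{m\neq0\}$, and the two quotients agree on the overlap by $\alpha\overline{m}=m\overline{\alpha}$. Density of $\Sigma$ ensures that $\{\alpha=m=0\}$ has empty interior.

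To rule out jumps of the constant across curves where $\alpha=m=0$, I would try using $\beta$ and $n$ together with $\alpha\beta=\overline{\alpha}\overline{\beta}$, so that the $\beta$-side quotient propagates across the locus where $\alpha$ and $m$ vanish, and also the integrability condition (\ref{ingtegrability}). If this fails, I expect one must invoke connectedness of $\Sigma$, or argue component by component. Once a single global constant is obtained, the relations $\overline{\alpha}=\varepsilon e^{-a}\alpha$ and so on extend to $I\times\Lambda$ by continuity, and the first paragraph finishes the proof.
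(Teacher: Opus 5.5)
Your local computation is correct and matches the paper's in substance. The paper differentiates $\alpha\beta=\overline{\alpha}\,\overline{\beta}$, combines it with the first hypothesis to get $\alpha_t\beta=\overline{\alpha}_t\overline{\beta}$, and reads off $(\overline{\alpha}/\alpha)_t=0$ from a $2\times2$ determinant; your log-derivative identity does the same more cleanly. Your final step is also exactly the paper's.

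The gap is the one you flag yourself: you only obtain that $f=\overline{\alpha}/\alpha$ is locally constant on $\Sigma$. At this point the paper's proof just asserts that there is a single constant $b$ with $\overline{\alpha}=b\alpha$ on $\Sigma$, which silently assumes $\Sigma$ is connected. Your proposed repair does not close this.

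Using $\overline{\beta}/\beta$ and $\overline{n}/n$ helps only partly. These are locally constant on $\{\beta\neq0\}$ and $\{n\neq0\}$ by the same density argument, and they equal $1/f$ on $\Sigma$. So they extend $f$ as a locally constant function to $W=\{(\alpha,\beta,m,n)\neq(0,0,0,0)\}$, but no further. The integrability condition adds nothing, since it holds automatically for any actual family. And $W$ can be disconnected, in which case the statement itself fails. Arguing component by component only gives a different motion on each piece, which is not a congruence.

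Here is a counterexample. Let $h(t)=e^{-1/t^2}$ for $t\neq0$ and $h(0)=0$. Let $h_\pm$ equal $h$ on $\{\pm t>0\}$ and $0$ elsewhere; both are smooth. Put $\bm{\gamma}(t,\lambda)=h(t)(\lambda,\lambda^2)$ on $\mathbb{R}\times\mathbb{R}$. Then
$\alpha=\frac12h'(\lambda+\lambda^2)$, $\beta=\frac12h'(\lambda-\lambda^2)$, $m=\frac12h(1+2\lambda)$ and $n=\frac12h(1-2\lambda)$, so $\Sigma$ is dense.

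Let $A$ be the matrix of Definition \ref{Lorentz motion} with the $+$ sign and $a\neq0$, and define the smooth family $\overline{\bm{\gamma}}=h_+(t)(\lambda,\lambda^2)+h_-(t)A(\lambda,\lambda^2)$. By Proposition \ref{prop2}, its data are $(\alpha,\beta,m,n)$ for $t\ge0$ and $(e^{-a}\alpha,e^{a}\beta,e^{-a}m,e^{a}n)$ for $t<0$. Each Wronskian-type expression and each product then scales by $e^{-a}e^{a}=1$, and $\alpha\overline{m}=e^{-a}\alpha m=m\overline{\alpha}$, so all seven hypotheses hold.

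However, $\det(\bm{\gamma}_t,\bm{\gamma}_\lambda)=hh'\lambda^2\neq0$ on open subsets of both half-planes. So $\overline{\bm{\gamma}}=B\bm{\gamma}+\bm{c}$ would force $B=I$ from $t>0$ and $B=A\neq I$ from $t<0$. The theorem therefore needs an extra hypothesis. If $\Sigma$ is connected, or more generally if $W$ is connected (for instance when $(\alpha,\beta,m,n)$ vanishes only at isolated points), your argument, including the $\beta,n$ gluing, is a complete proof.
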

\begin{proof}
Fix a point $(t_0,\lambda_0)\in \Sigma$. From the assumptions, we have
\begin{align*}
\alpha(t_0,\lambda_0)\beta(t_0,\lambda_0)=\overline{\alpha}(t_0,\lambda_0)\overline{\beta}(t_0,\lambda_0)\neq0,\quad
m(t_0,\lambda_0)n(t_0,\lambda_0)=\overline{m}(t_0,\lambda_0)\overline{n}(t_0,\lambda_0)\neq0.
 \end{align*}
Hence $\alpha(t_0,\lambda_0)\beta(t_0,\lambda_0)=\overline{\alpha}(t_0,\lambda_0)\overline{\beta}(t_0,\lambda_0)>0~\mathrm{or}<0$,
and $m(t_0,\lambda_0)n(t_0,\lambda_0)=\overline{m}(t_0,\lambda_0)\overline{n}(t_0,\lambda_0)>0~\mathrm{or}<0.$
Moreover, since $\alpha(t_0,\lambda_0)\overline{m}(t_0,\lambda_0)=m(t_0,\lambda_0)\overline{\alpha}(t_0,\lambda_0)$, there exists a Lorenz motion, i.e., a matrix
\begin{align*}
A=\left(\begin{array}{cc}
          \cosh a & -\sinh a \\
          -\sinh a & \cosh a
        \end{array}
\right)
\end{align*}
and a constant $\bm{c}\in\mathbb{R}^2_1$ such that
\begin{align*}
\overline{\bm{\gamma}}(t_0,\lambda_0)=\pm A(\bm{\gamma}(t_0,\lambda_0))+\bm{c},\quad
\overline{\bm{\gamma}}_t(t_0,\lambda_0)=\pm A(\bm{\gamma}_t(t_0,\lambda_0)),\quad
\overline{\bm{\gamma}}_\lambda(t_0,\lambda_0)=\pm A(\bm{\gamma}_\lambda(t_0,\lambda_0)).
\end{align*}
Now differentiate $\alpha(t,\lambda)\beta(t,\lambda)=\overline{\alpha}(t,\lambda)\overline{\beta}(t,\lambda)$ with respect to $t$:
\begin{align*}
\alpha_t(t,\lambda)\beta(t,\lambda)+\alpha(t,\lambda)\beta_t(t,\lambda)
=\overline{\alpha}_t(t,\lambda)\overline{\beta}(t,\lambda)+\overline{\alpha}(t,\lambda)\overline{\beta}_t(t,\lambda).
\end{align*}
Using the condition
\begin{align*}
\alpha_t(t,\lambda)\beta(t,\lambda)-\alpha(t,\lambda)\beta_t(t,\lambda)
=\overline{\alpha}_t(t,\lambda)\overline{\beta}(t,\lambda)-\overline{\alpha}(t,\lambda)\overline{\beta}_t(t,\lambda),
\end{align*}
we obtain $\alpha_t(t,\lambda)\beta(t,\lambda)=\overline{\alpha}_t(t,\lambda)\overline{\beta}(t,\lambda)$ and $\alpha(t,\lambda)\beta_t(t,\lambda)=\overline{\alpha}(t,\lambda)\overline{\beta}_t(t,\lambda)$.
Thus, for $(t,\lambda)\in \Sigma$, we have
\begin{align*}
\left(\begin{array}{cc}
        \alpha(t,\lambda) & \overline{\alpha}(t,\lambda) \\
        \alpha_t(t,\lambda) & \overline{\alpha}_t(t,\lambda)
      \end{array}
\right)
\left(\begin{array}{c}
        \beta(t,\lambda) \\
        -\overline{\beta}(t,\lambda)
      \end{array}
\right)=
\left(\begin{array}{c}
        0 \\
        0
      \end{array}
\right).
\end{align*}
Since $\alpha(t,\lambda)\neq0$ and $\beta(t,\lambda)\neq0$ on $\Sigma$. This implies that $$\alpha(t,\lambda)\overline{\alpha}_t(t,\lambda)-\alpha_t(t,\lambda)\overline{\alpha}(t,\lambda)=0$$ on $\Sigma,$
which is equivalent to
$$\frac{\partial}{\partial t}\bigg(\frac{\overline{\alpha}(t,\lambda)}{\alpha(t,\lambda)}\bigg)=0$$
on $\Sigma.$
By taking the partial derivative of $\alpha(t,\lambda)\beta(t,\lambda)=\overline{\alpha}(t,\lambda)\overline{\beta}(t,\lambda)$ with respect to $\lambda$ and applying the condition
\begin{align*}
\alpha_\lambda(t,\lambda)\beta(t,\lambda)-\alpha(t,\lambda)\beta_\lambda(t,\lambda)=
\overline{\alpha}_\lambda(t,\lambda)\overline{\beta}(t,\lambda)-\overline{\alpha}(t,\lambda)\overline{\beta}_\lambda(t,\lambda),
\end{align*}
we also have
$$\frac{\partial}{\partial \lambda}\bigg(\frac{\overline{\alpha}(t,\lambda)}{\alpha(t,\lambda)}\bigg)=0$$
on $\Sigma.$
Hence, there exists a constant $b\in\mathbb{R}$ such that $\overline{\alpha}(t,\lambda)=b\alpha(t,\lambda)$ on $\Sigma.$ Evaluating at  $(t_0,\lambda_0)$ and using the fact that $\overline{\alpha}(t_0,\lambda_0)=\pm e^{-a}\alpha(t_0,\lambda_0)$ (from the Lorentz motion at $(t_0,\lambda_0)$), we obtain $b=\pm e^{-a}$.
Consequently, we have that $\overline{\beta}(t,\lambda)=\pm e^{a}\beta(t,\lambda)$ on $\Sigma$ by the relation $\alpha(t,\lambda)\beta(t,\lambda)=\overline{\alpha}(t,\lambda)\overline{\beta}(t,\lambda)$. Since $\Sigma$ is dense in $I\times\Lambda$, the equalities extend by continuity to the whole domain $I\times\Lambda$:
\begin{align*}
\overline{\alpha}(t,\lambda)=\pm e^{-a}\alpha(t,\lambda),\quad
\overline{\beta}(t,\lambda)=\pm e^{a}\beta(t,\lambda).
\end{align*}
An analogous argument using the conditions
\begin{align*}
&m_t(t,\lambda)n(t,\lambda)-m(t,\lambda)n_t(t,\lambda)=
\overline{m}_t(t,\lambda)\overline{n}(t,\lambda)-\overline{m}(t,\lambda)\overline{n}_t(t,\lambda),\\
&m_\lambda(t,\lambda)n(t,\lambda)-m(t,\lambda)n_\lambda(t,\lambda)=
\overline{m}_\lambda(t,\lambda)\overline{n}(t,\lambda)-\overline{m}(t,\lambda)\overline{n}_\lambda(t,\lambda),
\end{align*}
and
\begin{align*}
m(t,\lambda)n(t,\lambda)=\overline{m}(t,\lambda)\overline{n}(t,\lambda),
\end{align*}
we can obtain
\begin{align*}
\overline{m}(t,\lambda)=\pm e^{-a}m(t,\lambda),\quad
\overline{n}(t,\lambda)=\pm e^{a}n(t,\lambda)
\end{align*}
for $(t,\lambda)\in I\times\Lambda$. Therefore,
\begin{align*}
\frac{\partial}{\partial t}\left(\overline{\bm{\gamma}}(t,\lambda)\mp A(\bm{\gamma}(t,\lambda))\right)=0,\quad
\frac{\partial}{\partial \lambda}\left(\overline{\bm{\gamma}}(t,\lambda)\mp A(\bm{\gamma}(t,\lambda))\right)=0,
\end{align*}
so $\overline{\bm{\gamma}}(t,\lambda)\mp A(\bm{\gamma}(t,\lambda))$ is constant.
Using the initial condition $\overline{\bm{\gamma}}(t_0,\lambda_0)=\pm A(\bm{\gamma}(t_0,\lambda_0))+\bm{c}$, we conclude
$$\overline{\bm{\gamma}}(t,\lambda)=\pm A(\bm{\gamma}(t,\lambda))+\bm{c}$$
for all $(t,\lambda)\in I\times\Lambda$. Thus, $\bm{\gamma}$ and $\overline{\bm{\gamma}}$ are congruent through a Lorentz motion.
\end{proof}

\section{Envelopes of curve families in the Lorentz-Minkowski plane\label{section4}}
Let $\bm{\gamma}: I\times\Lambda\to \mathbb{R}^2_1$ be a one-parameter family of curves with lightlike tangential data $(\alpha,\beta,m,n)$. In order to solve Problem \ref{p1}, throughout the rest of this paper, we assume that there exist smooth functions $\theta,\ell:I\times\Lambda\to \mathbb{R}$ such that
\begin{align*}
(\alpha(t,\lambda),\beta(t,\lambda))
=\ell(t,\lambda)(\cos\theta(t,\lambda),\sin\theta(t,\lambda))
\end{align*}
for all $(t,\lambda)\in I\times\Lambda$, and the set
$$\overline{\Sigma}=\{(t,\lambda)\in I\times\Lambda\mid\ell(t,\lambda)\neq0\}$$
is dense in $I\times\Lambda$.
%By the continuity of the function, the following lemma can be deduced.
%\begin{lemma}\label{l4.1}
%Assume that there exist smooth functions $\theta,\ell:I\times\Lambda\to \mathbb{R}$ satisfying
%\begin{align*}
%(\alpha(t,\lambda),\beta(t,\lambda))
%=\ell(t,\lambda)(\cos\theta(t,\lambda),\sin\theta(t,\lambda))
%\end{align*}
%on the set $\overline{\Sigma}=\{(t,\lambda)\in I\times\Lambda\mid(\alpha(t,\lambda),\beta(t,\lambda))\neq(0,0)\}$. Then the products $\ell\cos\theta$ and $\ell\sin\theta$ are unique if and only if $\overline{\Sigma}$ is dense in $I\times\Lambda$.
%\end{lemma}
%\begin{remark}
%For each fixed $\lambda\in\Lambda$, if the singular points of $\bm{\gamma}(\cdot,\lambda): I\to \mathbb{R}^2_1$ are isolated, then the condition that $\overline{\Sigma}$ is dense in $I\times\Lambda$ is satisfied.
%\end{remark}
%In order to solve Problem \ref{p1}, throughout the rest of this paper, we assume that such functions $\theta,\ell$ exist and $\overline{\Sigma}=\{(t,\lambda)\in I\times\Lambda\mid(\alpha(t,\lambda),\beta(t,\lambda))\neq(0,0)\}$ is dense in $I\times\Lambda$. %Under these assumptions, Lemma \ref{l4.1} guarantees the uniqueness of $\theta$ and $\ell$.
%Now let $\bm{\gamma}: I\times\Lambda\to \mathbb{R}^2_1$ be a one-parameter family of curves,
%and
Let $\bm{e}:U\rightarrow I\times\Lambda$, $\bm{e}(u)=(t(u),\lambda(u))$ be a smooth curve. We denote $\bm{E}(u)=\bm{\gamma}\circ\bm{e}(u)$ and $\bm{\Gamma}_\lambda(t)=\bm{\gamma}(t,\lambda).$ Note that $\bm{e}$ is not necessarily regular.

\begin{definition}\label{definition_envelope of mixed curve family}
{\rm
We call $\bm{E}$ an {\it envelope} (and $\bm{e}$ a {\it pre-envelope}) of the family of curves $\bm{\gamma}$ if the following conditions are satisfied:
\begin{enumerate}
\item[(1)] (Variability Condition) The function $\lambda$ is surjective and is not constant on any non-trivial subinterval of $U$.
\item[(2)] (Tangency Condition) The curve $\bm{E}$ is tangent at $u$ to the curve $\bm{\Gamma}_{\lambda(u)}$ at $t(u)$ for all $u\in U$, that is, $$\mbox{det}\big(\bm{E}'(u),\bm{\mu}(\bm{e}(u))\big)=0$$
    holds for all $u\in U$, where $\bm{\mu}(\bm{e}(u))=\cos\theta(\bm{e}(u))\mathbb{L}^++\sin\theta(\bm{e}(u))\mathbb{L}^-.$
\end{enumerate}
}
\end{definition}

The following theorem characterizes envelopes of one-parameter families of curves in the Lorentz-Minkowski plane. %in terms of the lightlike tangential data $(\alpha,\beta,m,n)$.

\begin{theorem}\label{T3}
Let $\bm{\gamma}: I\times\Lambda\to \mathbb{R}^2_1$ be a one-parameter family of curves, %with lightlike tangential data $(\alpha,\beta,m,n)$,
and let $\bm{e}:U\rightarrow I\times\Lambda$ be a smooth curve satisfying the variability condition. Then $\bm{e}$ is a pre-envelope of $\bm{\gamma}$ (and $\bm{E}=\bm{\gamma}\circ\bm{e}$ is an envelope of $\bm{\gamma}$) if and only if
$$n(\bm{e}(u))\cos\theta(\bm{e}(u))=m(\bm{e}(u))\sin\theta(\bm{e}(u))$$
holds for all $u\in U.$
\end{theorem}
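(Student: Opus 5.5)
The plan is a direct computation of $\det(\bm{E}'(u),\bm{\mu}(\bm{e}(u)))$ in the lightcone frame. By the chain rule,
\[
\bm{E}'(u)=t'(u)\bm{\gamma}_t(\bm{e}(u))+\lambda'(u)\bm{\gamma}_\lambda(\bm{e}(u)).
\]
Substituting $\bm{\gamma}_t=\alpha\mathbb{L}^++\beta\mathbb{L}^-$ with $(\alpha,\beta)=\ell(\cos\theta,\sin\theta)$, and $\bm{\gamma}_\lambda=m\mathbb{L}^++n\mathbb{L}^-$, we obtain
\[
\bm{E}'(u)=\big(t'\ell\cos\theta+\lambda' m\big)\mathbb{L}^++\big(t'\ell\sin\theta+\lambda' n\big)\mathbb{L}^-,
\]
with everything evaluated at $\bm{e}(u)$. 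Since $\det$ is bilinear and alternating, $\det(a\mathbb{L}^++b\mathbb{L}^-,c\mathbb{L}^++d\mathbb{L}^-)=(ad-bc)\det(\mathbb{L}^+,\mathbb{L}^-)$, and $\det(\mathbb{L}^+,\mathbb{L}^-)=-2\neq0$. Hence
\[
\det(\bm{E}'(u),\bm{\mu}(\bm{e}(u)))=-2\Big[(t'\ell\cos\theta+\lambda' m)\sin\theta-(t'\ell\sin\theta+\lambda' n)\cos\theta\Big]
=2\lambda'(u)\big(n\cos\theta-m\sin\theta\big)(\bm{e}(u)).
\]
The $t'\ell$ terms cancel, which is the key point: $\bm{\mu}$ is parallel to $\bm{\gamma}_t$.

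So the tangency condition is equivalent to $\lambda'(u)\,(n\cos\theta-m\sin\theta)(\bm{e}(u))=0$ for all $u\in U$. The "if" direction is immediate from this identity.

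For the "only if" direction, we must remove the factor $\lambda'(u)$. This is the one non-computational step, and it is where the variability condition is used. Suppose the product vanishes identically, and set $Z=\{u\in U\mid \lambda'(u)\neq 0\}$. This set is open. It is also dense in $U$: otherwise $\lambda'$ would vanish on some nontrivial subinterval, so $\lambda$ would be constant there, contradicting condition (1). On $Z$ the function $(n\cos\theta-m\sin\theta)\circ\bm{e}$ vanishes. Since this function is continuous and $Z$ is dense, it vanishes on all of $U$. This gives the stated equation for all $u\in U$.

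No other obstacle is expected. The surjectivity part of the variability condition is not needed for this equivalence, only the non-constancy on subintervals.
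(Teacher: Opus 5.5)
Your proposal is correct and follows essentially the same route as the paper. Both arguments expand $\bm{E}'$ in the lightcone frame, observe that the $t'\ell$ terms cancel so that tangency reduces to $\lambda'(n\cos\theta-m\sin\theta)\circ\bm{e}=0$, and then remove $\lambda'$ using density of $\{\lambda'\neq0\}$ (from the variability condition) together with continuity; your version is only more explicit about the factor $\det(\mathbb{L}^+,\mathbb{L}^-)=-2$ and about why that set is dense.
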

\begin{proof}
Let $\bm{e}(u)=(t(u),\lambda(u))$ be a smooth curve satisfying the variability condition. Define $\bm{E}(u)=\bm{\gamma}(\bm{e}(u))$,
%and $\bm{\Gamma}_{\lambda(u)}(t(u))=\bm{\gamma}(t(u),\lambda(u)).$
then
\begin{align*}
\bm{E}'(u)=&\bm{\gamma}_t(\bm{e}(u))t'(u)+\bm{\gamma}_\lambda(\bm{e}(u))\lambda'(u)\\
=&\big(\alpha(\bm{e}(u))\mathbb{L}^++\beta(\bm{e}(u))\mathbb{L}^-\big)t'(u)
+\big(m(\bm{e}(u))\mathbb{L}^++n(\bm{e}(u))\mathbb{L}^-\big)\lambda'(u)\\
=&\left[\alpha(\bm{e}(u))t'(u)+m(\bm{e}(u))\lambda'(u)\right]\mathbb{L}^++
\left[\beta(\bm{e}(u))t'(u)+n(\bm{e}(u))\lambda'(u)\right]\mathbb{L}^-
\\
=&\left[\ell(\bm{e}(u))t'(u)\cos\theta(\bm{e}(u))+m(\bm{e}(u))\lambda'(u)\right]\mathbb{L}^+
+\left[\ell(\bm{e}(u))t'(u)\sin\theta(\bm{e}(u))+n(\bm{e}(u))\lambda'(u)\right]\mathbb{L}^-.
\end{align*}
%and
%\begin{align*}
%\frac{d\bm{\Gamma}_{\lambda(u)}}{dt}(t(u))=\bm{\gamma}_t(t(u),\lambda(u))=\alpha(\bm{e}(u))\mathbb{L}^++\beta(\bm{e}(u))\mathbb{L}^-.
%\end{align*}
The tangency condition $$\mbox{det}\big(\bm{E}'(u),\bm{\mu}(\bm{e}(u))\big)=0$$
holds for all $u\in U$ is equivalent to
$$(\ell(\bm{e}(u))t'(u)\sin\theta(\bm{e}(u))+n(\bm{e}(u))\lambda'(u))\cos\theta(\bm{e}(u))=(\ell(\bm{e}(u))t'(u)\cos\theta(\bm{e}(u))+m(\bm{e}(u))\lambda'(u))\sin\theta(\bm{e}(u))$$
holds for all $u\in U$.
Simplifying this equation yields
$$\lambda'(u)n(\bm{e}(u))\cos\theta(\bm{e}(u))=\lambda'(u)m(\bm{e}(u))\sin\theta(\bm{e}(u)).$$
Since $\bm{e}$ satisfies the variability condition, $\lambda'(u)\neq0$ on a dense subset of $U$. Therefore, the equality holds for all $u\in U$ if and only if
$$n(\bm{e}(u))\cos\theta(\bm{e}(u))=m(\bm{e}(u))\sin\theta(\bm{e}(u))$$
holds for all $u\in U$.
\end{proof}

Let's review Examples \ref{example astroid1} and \ref{example closed curves} by applying Theorem \ref{T3}.

\begin{example}\label{e4}
In Example \ref{example astroid1}, $\bm{\gamma}: [0,2\pi)\times[0,2\pi)\to \mathbb{R}^2_1$ is a one-parameter family of astroid curves defined by $$\bm{\gamma}(t,\lambda)=(2\cos\lambda\cos^3t-2\sin\lambda\sin^3t,2\sin\lambda\cos^3t+2\cos\lambda\sin^3t),$$
with lightlike tangential data
\begin{align*}
\alpha(t,\lambda)&=3\sin t\cos t\left[-(\cos\lambda+\sin\lambda)\cos t+(\cos\lambda-\sin\lambda)\sin t\right],\\
\beta(t,\lambda)&=3\sin t\cos t\left[(-\cos\lambda+\sin\lambda)\cos t-(\cos\lambda+\sin\lambda)\sin t\right],\\
m(t,\lambda)&=(\cos\lambda-\sin\lambda)\cos^3 t-(\cos\lambda+\sin\lambda)\sin^3 t,\\
n(t,\lambda)&=-(\cos\lambda+\sin\lambda)\cos^3 t-(\cos\lambda-\sin\lambda)\sin^3 t.
\end{align*}
There exist functions $\theta,\ell:I\times\Lambda\to \mathbb{R}$ defined by
\begin{align*}
\theta(t,\lambda)=t-\lambda+\frac{\pi}{4},\quad
\ell(t,\lambda)=-3\sqrt{2}\sin t\cos t
\end{align*}
such that $(\alpha(t,\lambda),\beta(t,\lambda))
=\ell(t,\lambda)(\cos\theta(t,\lambda),\sin\theta(t,\lambda))$.
Consider the curves
\begin{align*}
\bm{e}(u)=\bigg(\frac{\pi}{4},u\bigg),\quad
\bm{e}(u)=\bigg(\frac{3\pi}{4},u\bigg),\quad
\bm{e}(u)=\bigg(\frac{5\pi}{4},u\bigg),\quad
\bm{e}(u)=\bigg(\frac{7\pi}{4},u\bigg),
\end{align*}
each mapping $[0,2\pi)\rightarrow[0,2\pi)\times[0,2\pi).$  Each of these curves satisfies the variability condition. Moreover, one verifies that
\begin{align*}
n(\bm{e}(u))\cos\theta(\bm{e}(u))=m(\bm{e}(u))\sin\theta(\bm{e}(u)),
\end{align*}
so by Theorem \ref{T3}, each $\bm{e}$ is a pre-envelope of $\bm{\gamma}$. The corresponding envelopes are
\begin{align*}
\bm{E}(u)=&\left(\cos\bigg(u+\frac{\pi}{4}\bigg),\sin\bigg(u+\frac{\pi}{4}\bigg)\right),\quad
\left(\cos\bigg(u+\frac{3\pi}{4}\bigg),\sin\bigg(u+\frac{3\pi}{4}\bigg)\right),\\
&\left(\cos\bigg(u+\frac{5\pi}{4}\bigg),\sin\bigg(u+\frac{5\pi}{4}\bigg)\right),\quad
\left(\cos\bigg(u+\frac{7\pi}{4}\bigg),\sin\bigg(u+\frac{7\pi}{4}\bigg)\right),
\end{align*}
respectively (see Fig. \ref{figure_example4}). \\
If instead we take $\bm{e}(u)=(0,u)$, then
$$\bm{E}(u)=(2\cos u, 2\sin u),$$
but in this case $n(\bm{e}(u))\cos\theta(\bm{e}(u))\neq m(\bm{e}(u))\sin\theta(\bm{e}(u))$. Hence, by Theorem \ref{T3}, $\bm{E}(u)=(2\cos u, 2\sin u)$ is not an envelope of $\bm{\gamma}$.
Therefore, Theorem \ref{T3} reveals that the set $\mathcal{D}$ calculated in Example \ref{example astroid} is precisely the union of the Euclidean circle $(2\cos u, 2\sin u)$ and the envelope of $\bm{\gamma}$.
\begin{figure}[h]
\begin{center}
\includegraphics[width=6.2cm]
{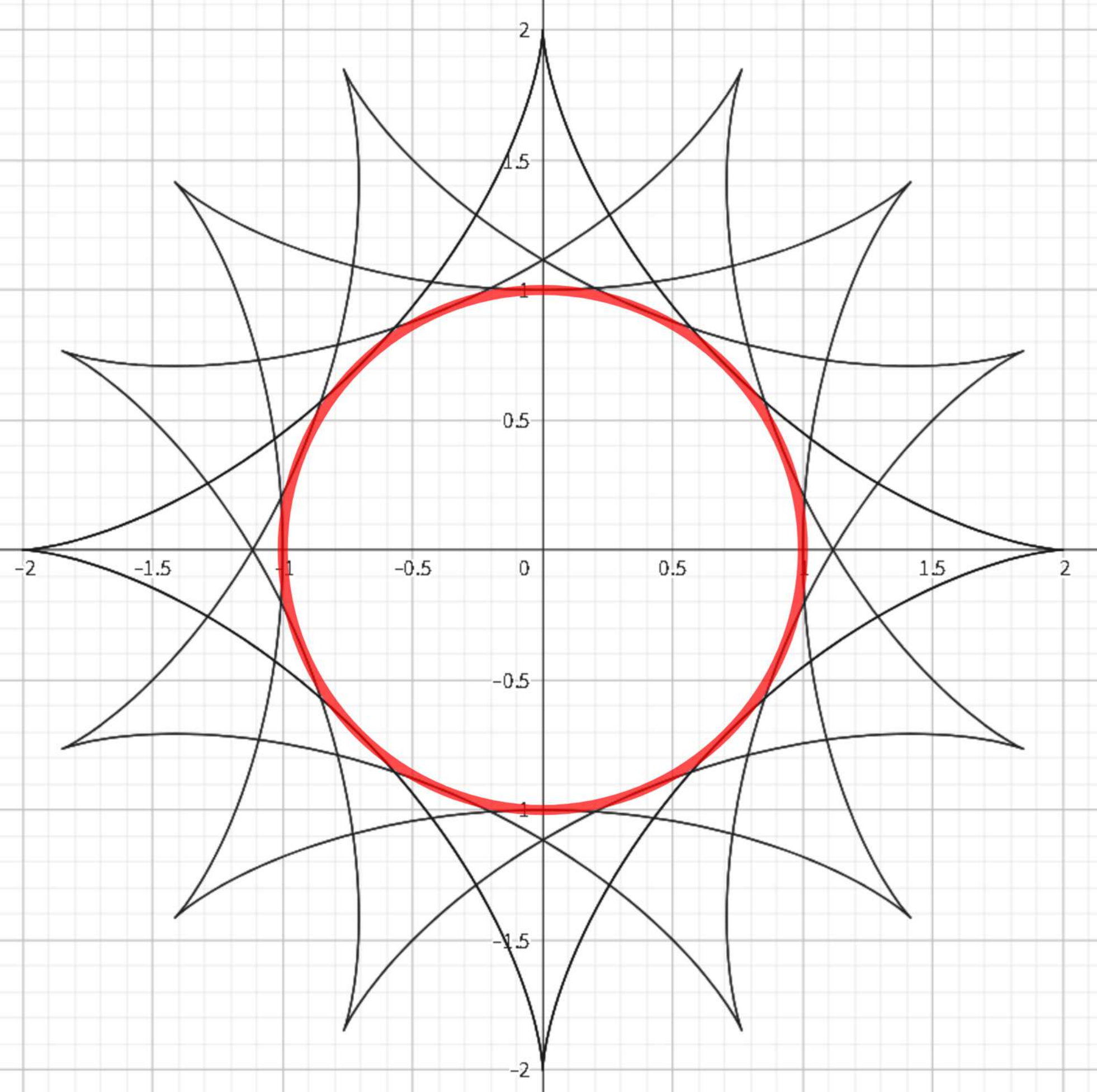}

\caption{One-parameter family of astroid curves and its envelope.
}
\label{figure_example4}
\end{center}
\end{figure}
\end{example}

\begin{example}
In Example \ref{example closed curves}, $\bm{\gamma}: [0,2\pi)\times[0,2\pi)\to \mathbb{R}^2_1$ is a one-parameter family of closed curves defined by
\begin{align*}
\bm{\gamma}(t,\lambda)=\left(\frac{1}{3}\cos\lambda\cos^3t+\frac{1}{3}\sin\lambda\sin^3t-\sin\lambda\sin t,~ \frac{1}{3}\sin\lambda\cos^3t-\frac{1}{3}\cos\lambda\sin^3t+\cos\lambda\sin t\right)
\end{align*}
with lightlike tangential data
\begin{align*}
\alpha(t,\lambda)&=-\frac{1}{2}\cos^2t\left[\cos\lambda(\sin t-\cos t)+\sin\lambda(\sin t+\cos t)\right],\\
\beta(t,\lambda)&=\frac{1}{2}\cos^2t\left[\sin\lambda(\sin t-\cos t)-\cos\lambda(\sin t+\cos t)\right],\\
m(t,\lambda)&=\frac{1}{6}(\cos\lambda-\sin\lambda)\cos^3t-\frac{1}{2}(\cos\lambda+\sin\lambda)\sin t+\frac{1}{6}(\cos\lambda+\sin\lambda)\sin^3t,\\
n(t,\lambda)&=-\frac{1}{6}(\cos\lambda+\sin\lambda)\cos^3t-\frac{1}{2}(\cos\lambda-\sin\lambda)\sin t+\frac{1}{6}(\cos\lambda-\sin\lambda)\sin^3t.
\end{align*}
There exist functions $\theta,\ell:I\times\Lambda\to \mathbb{R}$ defined by
\begin{align*}
\theta(t,\lambda)=-t-\lambda-\frac{\pi}{4},\quad
\ell(t,\lambda)=\frac{\sqrt{2}}{2}\cos^2t
\end{align*}
such that $(\alpha(t,\lambda),\beta(t,\lambda))
=\ell(t,\lambda)(\cos\theta(t,\lambda),\sin\theta(t,\lambda))$.
Consider the curves
\begin{align*}
\bm{e}(u)=(0,u),\quad
\bm{e}(u)=\bigg(\frac{\pi}{2},u\bigg),\quad
\bm{e}(u)=(\pi,u),\quad
\bm{e}(u)=\bigg(\frac{3\pi}{2},u\bigg),
\end{align*}
each mapping $[0,2\pi)\rightarrow[0,2\pi)\times[0,2\pi).$  Each of these curves satisfies the variability condition. Moreover, one verifies that
\begin{align*}
n(\bm{e}(u))\cos\theta(\bm{e}(u))=m(\bm{e}(u))\sin\theta(\bm{e}(u)),
\end{align*}
so by Theorem \ref{T3}, each $\bm{e}$ is a pre-envelope of $\bm{\gamma}$. The corresponding envelopes are
\begin{align*}
\bm{E}(u)=&\left(\frac{1}{3}\cos u,\frac{1}{3}\sin u\right),\quad
\left(-\frac{2}{3}\sin u,\frac{2}{3}\cos u\right),\\
&\left(-\frac{1}{3}\cos u,-\frac{1}{3}\sin u\right),\quad
\left(\frac{2}{3}\sin u,-\frac{2}{3}\cos u\right),
\end{align*}
respectively (see Fig \ref{figure_example5}). In contrast to Example \ref{e4},  the difference here is that the curve traced by the singular points of each member of the family now forms an envelope.
\begin{figure}[h]
\begin{center}
\includegraphics[width=6.6cm]
{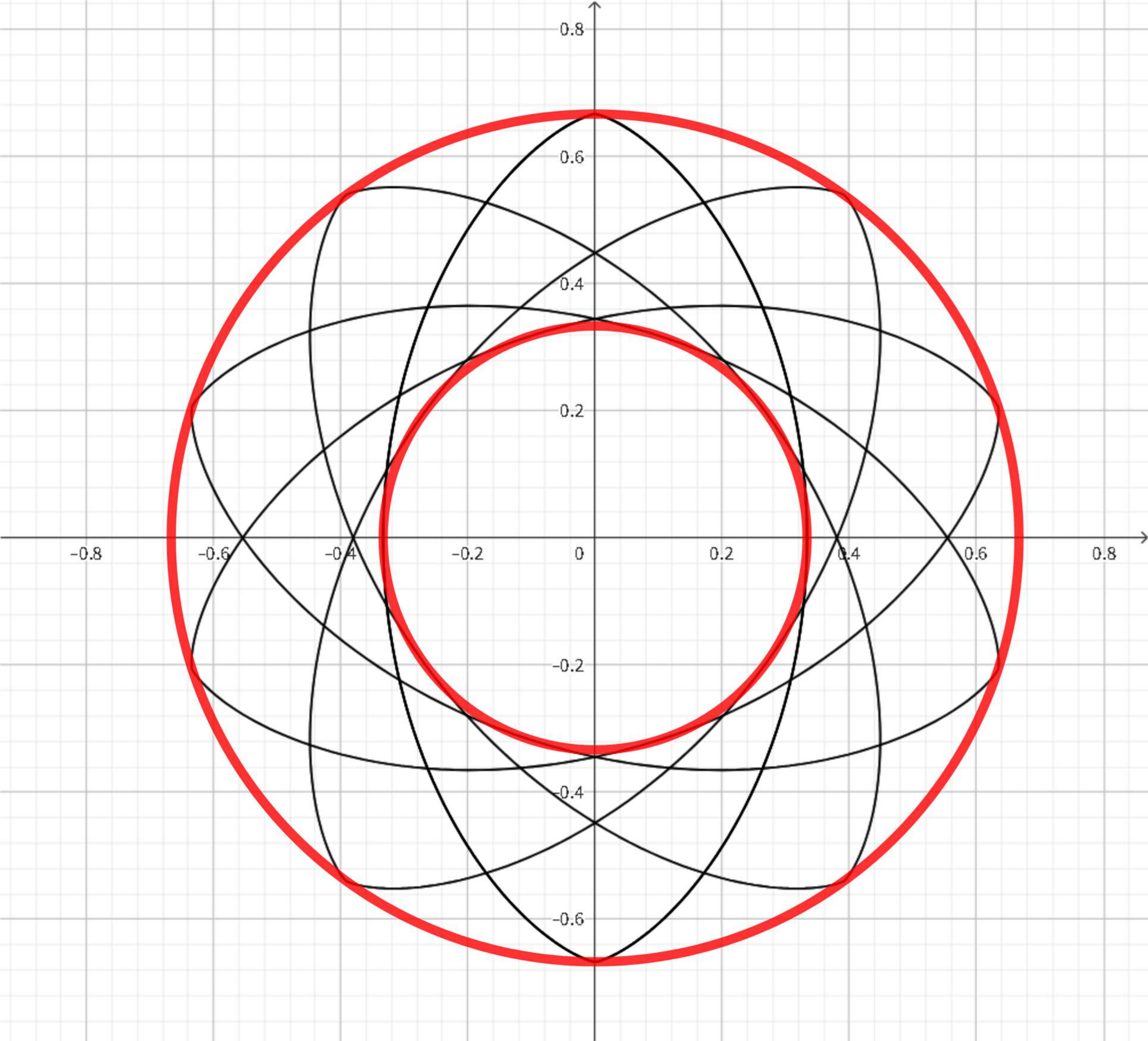}

\caption{One-parameter family of closed curves and its envelopes.
}
\label{figure_example5}
\end{center}
\end{figure}
\end{example}

\begin{definition}
A map $\Phi:\widetilde{I}\times\widetilde{\Lambda}\rightarrow I\times\Lambda$ is called a {\it one-parameter family of parameter change} if $\Phi$ is a diffeomorphism and can be expressed in the form
$$\Phi(s,k)=(\phi(s,k),\; \varphi(k)),$$
where $\phi$ and $\varphi$ are smooth functions.
\end{definition}

\begin{proposition}
Let $\bm{\gamma}: I\times\Lambda\to \mathbb{R}^2_1$ be a one-parameter family of curves with the lightlike tangential data $(\alpha,\beta,m,n)$. Suppose that $\Phi:\widetilde{I}\times\widetilde{\Lambda}\rightarrow I\times\Lambda$ is a one-parameter family of parameter change, $\bm{e}:U\rightarrow I\times\Lambda$ is a pre-envelope, and $\bm{E}=\bm{\gamma}\circ\bm{e}$ is an envelope. Define $\widetilde{\bm{\gamma}}=\bm{\gamma}\circ\Phi:\widetilde{I}\times\widetilde{\Lambda}\rightarrow\mathbb{R}^2_1$.
Then $\widetilde{\bm{\gamma}}$ is a one-parameter family of curves with lightlike tangential data given by
\begin{align*}
\widetilde{\alpha}(s,k)&=\phi_s(s,k)\alpha(\phi(s,k),\varphi(k)),\\
\widetilde{\beta}(s,k)&=\phi_s(s,k)\beta(\phi(s,k),\varphi(k)),\\
\widetilde{m}(s,k)&=\phi_k(s,k)\alpha(\phi(s,k),\varphi(k))+\varphi'(k)m(\phi(s,k),\varphi(k)),\\
\widetilde{n}(s,k)&=\phi_k(s,k)\beta(\phi(s,k),\varphi(k))+\varphi'(k)n(\phi(s,k),\varphi(k)).
\end{align*}
Moreover, $\Phi^{-1}\circ\bm{e}:U\rightarrow\widetilde{I}\times\widetilde{\Lambda}$ is a pre-envelope of $\widetilde{\bm{\gamma}}$, and $\bm{E}$ is also an envelope of $\widetilde{\bm{\gamma}}$.
\end{proposition}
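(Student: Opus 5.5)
The argument has three parts: compute the lightlike tangential data of $\widetilde{\bm{\gamma}}$ by the chain rule, exhibit the polar decomposition $(\widetilde\theta,\widetilde\ell)$ that the standing assumption of Section \ref{section4} requires, and then verify the criterion of Theorem \ref{T3} for $\Phi^{-1}\circ\bm{e}$.

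\textbf{Step 1: the tangential data.} Differentiating $\widetilde{\bm{\gamma}}(s,k)=\bm{\gamma}(\phi(s,k),\varphi(k))$ with respect to $s$ gives
$\widetilde{\bm{\gamma}}_s=\phi_s\,\bm{\gamma}_t\circ\Phi=\phi_s(\alpha\circ\Phi)\mathbb{L}^++\phi_s(\beta\circ\Phi)\mathbb{L}^-$.
Differentiating with respect to $k$ gives
$\widetilde{\bm{\gamma}}_k=\phi_k\,\bm{\gamma}_t\circ\Phi+\varphi'\,\bm{\gamma}_\lambda\circ\Phi$.
Expanding both in the lightcone frame $\{\mathbb{L}^+,\mathbb{L}^-\}$ yields the four stated formulas. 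The integrability condition (\ref{ingtegrability}) for $(\widetilde\alpha,\widetilde\beta,\widetilde m,\widetilde n)$ needs no computation: it is just $\widetilde{\bm{\gamma}}_{sk}=\widetilde{\bm{\gamma}}_{ks}$, which holds because $\widetilde{\bm{\gamma}}$ is smooth.

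\textbf{Step 2: the standing assumption for $\widetilde{\bm{\gamma}}$.} Set $\widetilde\theta=\theta\circ\Phi$ and $\widetilde\ell=\phi_s\,(\ell\circ\Phi)$. Then $(\widetilde\alpha,\widetilde\beta)=\widetilde\ell(\cos\widetilde\theta,\sin\widetilde\theta)$. Since $\Phi$ is a diffeomorphism of the triangular form $(\phi(s,k),\varphi(k))$, its Jacobian is $\phi_s\varphi'\neq0$, so $\phi_s$ never vanishes. Hence $\{\widetilde\ell\neq0\}=\Phi^{-1}(\overline{\Sigma})$, which is dense because $\Phi^{-1}$ is a homeomorphism. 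Consequently $\widetilde{\bm{\mu}}=\bm{\mu}\circ\Phi$, and Definition \ref{definition_envelope of mixed curve family} applies to $\widetilde{\bm{\gamma}}$.

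\textbf{Step 3: variability.} Write $\widetilde{\bm{e}}=\Phi^{-1}\circ\bm{e}=(s(u),k(u))$; its second component is $k(u)=\varphi^{-1}(\lambda(u))$. Here $\varphi$ is a diffeomorphism $\widetilde\Lambda\to\Lambda$, because $\Phi$ is bijective and its second component depends only on $k$. Therefore $k$ is surjective onto $\widetilde\Lambda$, and $k$ is constant on a subinterval exactly when $\lambda$ is, so the variability condition transfers. I expect this to be the only delicate point: one has to justify that $\varphi$ is itself a diffeomorphism, and the injectivity of $\varphi$ may need the argument that $\varphi'\neq0$ everywhere on the interval $\widetilde\Lambda$.

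\textbf{Step 4: tangency.} Since $\Phi(\widetilde{\bm{e}}(u))=\bm{e}(u)$,
\begin{align*}
\widetilde n\cos\widetilde\theta-\widetilde m\sin\widetilde\theta
=\phi_k(\beta\cos\theta-\alpha\sin\theta)+\varphi'(n\cos\theta-m\sin\theta),
\end{align*}
where all functions on the right are evaluated at $\bm{e}(u)$. The first bracket is $\ell(\sin\theta\cos\theta-\cos\theta\sin\theta)=0$. The second vanishes by Theorem \ref{T3} applied to the pre-envelope $\bm{e}$. Hence Theorem \ref{T3} applied to $\widetilde{\bm{\gamma}}$ shows that $\widetilde{\bm{e}}$ is a pre-envelope. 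Finally, $\widetilde{\bm{\gamma}}\circ\widetilde{\bm{e}}=\bm{\gamma}\circ\bm{e}=\bm{E}$, so $\bm{E}$ is also an envelope of $\widetilde{\bm{\gamma}}$.
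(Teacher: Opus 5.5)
Your proposal is correct and follows the paper's proof essentially step for step. You use the chain rule for the tangential data, set $\widetilde\theta=\theta\circ\Phi$ and $\widetilde\ell=\phi_s(\ell\circ\Phi)$, transfer the variability condition through $k=\varphi^{-1}\circ\lambda$, and feed $\widetilde n\cos\widetilde\theta-\widetilde m\sin\widetilde\theta=\phi_k(\beta\cos\theta-\alpha\sin\theta)+\varphi'(n\cos\theta-m\sin\theta)=0$ into Theorem \ref{T3}. Your extra checks fill in points the paper leaves implicit: that $\phi_s\varphi'\neq0$ makes $\{\widetilde\ell\neq0\}$ dense, so the standing assumption holds for $\widetilde{\bm{\gamma}}$, and that $\varphi$ is a diffeomorphism, so $k$ is surjective.
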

\begin{proof}
Since $\Phi(s,k)=(\phi(s,k),\varphi(k)),$ we have $\widetilde{\bm{\gamma}}(s,k)=\bm{\gamma}(\phi(s,k),\varphi(k))$. Differentiating, \begin{align*}
\widetilde{\bm{\gamma}}_s(s,k)&=\phi_s(s,k)\bm{\gamma}_t(\phi(s,k),\varphi(k)),\\ \widetilde{\bm{\gamma}}_k(s,k)&=\phi_k(s,k)\bm{\gamma}_t(\phi(s,k),\varphi(k))+\varphi'(k)\bm{\gamma}_k(\phi(s,k),\varphi(k)).
\end{align*}
A direct calculation then yields the lightlike tangential data of $\widetilde{\bm{\gamma}}$ as following:
\begin{align*}
\widetilde{\alpha}(s,k)&=\phi_s(s,k)\alpha(\phi(s,k),\varphi(k)),\\
\widetilde{\beta}(s,k)&=\phi_s(s,k)\beta(\phi(s,k),\varphi(k)),\\
\widetilde{m}(s,k)&=\phi_k(s,k)\alpha(\phi(s,k),\varphi(k))+\varphi'(k)m(\phi(s,k),\varphi(k)),\\
\widetilde{n}(s,k)&=\phi_k(s,k)\beta(\phi(s,k),\varphi(k))+\varphi'(k)n(\phi(s,k),\varphi(k)).
\end{align*}
The inverse map $\Phi^{-1}:I\times\Lambda\rightarrow\widetilde{I}\times\widetilde{\Lambda}$  can be written as  $\Phi^{-1}(t,\lambda)=(\psi(t,\lambda),\varphi^{-1}(\lambda))$. Consequently, $$\Phi^{-1}\circ\bm{e}(u)=(\psi(t(u),\lambda(u)),\varphi^{-1}(\lambda(u))).$$
Because $\bm{e}$ is a pre-envelope of $\bm{\gamma}$ and $(d/du)\varphi^{-1}(\lambda(u))=\lambda'(u)\varphi^{-1}_\lambda(\lambda(u))$,  the variability condition holds. Moreover, using the relation
\begin{align*}
\widetilde{\bm{\gamma}}_s(\Phi^{-1}\circ\bm{e}(u))=&\phi_s(\psi(t(u),\lambda(u)),\varphi^{-1}(\lambda(u)))\alpha(\bm{e}(u))\mathbb{L}^+
+\phi_s(\psi(t(u),\lambda(u)),\varphi^{-1}(\lambda(u)))\beta(\bm{e}(u))\mathbb{L}^-,\\
=&\phi_s(\psi(t(u),\lambda(u)),\varphi^{-1}(\lambda(u)))\ell(\bm{e}(u))\cos\theta(\bm{e}(u))\mathbb{L}^+\\
&+\phi_s(\psi(t(u),\lambda(u)),\varphi^{-1}(\lambda(u)))\ell(\bm{e}(u))\sin\theta(\bm{e}(u))\mathbb{L}^-,
\end{align*}
we have
\begin{align*}
\widetilde{\theta}(\Phi^{-1}\circ\bm{e}(u))=\theta(\bm{e}(u)),\quad
\widetilde{\ell}(\Phi^{-1}\circ\bm{e}(u))=\phi_s(\psi(t(u),\lambda(u)),\varphi^{-1}(\lambda(u)))\ell(\bm{e}(u)).
\end{align*}
By the relation
\begin{align*}
\widetilde{\bm{\gamma}}_k(\Phi^{-1}\circ\bm{e}(u))=&\left[\phi_k(\psi(t(u),\lambda(u)),\varphi^{-1}(\lambda(u)))\alpha(\bm{e}(u))
+\varphi_k(\varphi^{-1}(\lambda(u)))m(\bm{e}(u))\right]\mathbb{L}^+\\
&+\left[\phi_k(\psi(t(u),\lambda(u)),\varphi^{-1}(\lambda(u)))\beta(\bm{e}(u))
+\varphi_k(\varphi^{-1}(\lambda(u)))n(\bm{e}(u))\right]\mathbb{L}^-
\end{align*}
and the condition $n(\bm{e}(u))\cos\theta(\bm{e}(u))=m(\bm{e}(u))\sin\theta(\bm{e}(u))$, we have
\begin{align*}
&\widetilde{n}(\Phi^{-1}\circ\bm{e}(u))\cos\widetilde{\theta}(\Phi^{-1}\circ\bm{e}(u))\\
=&
\big[\phi_k(\psi(t(u),\lambda(u)),\varphi^{-1}(\lambda(u)))\beta(\bm{e}(u))
+\varphi_k(\varphi^{-1}(\lambda(u)))n(\bm{e}(u))\big]\cos\theta(\bm{e}(u))\\
=&\phi_k(\psi(t(u),\lambda(u)),\varphi^{-1}(\lambda(u)))\beta(\bm{e}(u))\cos\theta(\bm{e}(u))
+\varphi_k(\varphi^{-1}(\lambda(u)))n(\bm{e}(u))\cos\theta(\bm{e}(u))\\
=&\phi_k(\psi(t(u),\lambda(u)),\varphi^{-1}(\lambda(u)))\ell(\bm{e}(u))\cos\theta(\bm{e}(u))\sin\theta(\bm{e}(u))
+\varphi_k(\varphi^{-1}(\lambda(u)))m(\bm{e}(u))\sin\theta(\bm{e}(u))\\
=&\big[\phi_k(\psi(t(u),\lambda(u)),\varphi^{-1}(\lambda(u)))\alpha(\bm{e}(u))
+\varphi_k(\varphi^{-1}(\lambda(u)))m(\bm{e}(u))
\big]\sin\theta(\bm{e}(u))\\
=&\widetilde{m}(\Phi^{-1}\circ\bm{e}(u))\sin\widetilde{\theta}(\Phi^{-1}\circ\bm{e}(u)).
\end{align*}
Thus, by Theorem \ref{T3}, $\Phi^{-1}\circ\bm{e}$ is a pre-envelope of $\widetilde{\bm{\gamma}}$. Finally, since $$\widetilde{\bm{\gamma}}(\Phi^{-1}\circ\bm{e}(u))=\bm{\gamma}\circ\Phi\circ\Phi^{-1}\circ\bm{e}(u)
=\bm{\gamma}\circ\bm{e}(u)=\bm{E}(u),$$
it follows that $\bm{E}$ is also an envelope of $\widetilde{\bm{\gamma}}$.
\end{proof}

In Examples \ref{example astroid} and \ref{e4}, the $\mathcal{D}$ envelope (Definition \ref{classical definition}) turns out to be strictly larger than the $\bm{E}$ envelope (Definition \ref{definition_envelope of mixed curve family}). Namely, the Euclidean circle $\{(x,y)\in\mathbb{R}^2_1|x^2+y^2=4\}$ is redundant. Therefore, it is both interesting and meaningful to investigate the precise relationship between $\mathcal{D}$ envelopes and $\bm{E}$ envelopes for one-parameter families of curves in the Lorentz-Minkowski
plane.

\begin{theorem}\label{T4}
Let $\bm{\gamma}: I\times\Lambda\to \mathbb{R}^2_1$ be a one-parameter family of curves defined implicitly by $F(x,y,\lambda)=0$, that is, $F(x(t,\lambda),y(t,\lambda),\lambda)=0,$
where $\bm{\gamma}(t,\lambda)=(x(t,\lambda),y(t,\lambda))$. Let $\bm{e}:U\rightarrow I\times\Lambda$ be a pre-envelope of $\bm{\gamma}$ (i.e., $\bm{E}=\bm{\gamma}\circ\bm{e}:U\rightarrow\mathbb{R}^2_1$ is an envelope of $\bm{\gamma}$).
%If the set
%$$\{u\in U\mid(\alpha(\bm{e}(u)),\beta(\bm{e}(u)))\neq(0,0)\}$$
%is dense in $U$,
Then $\bm{E}\subset\mathcal{D}$.
\end{theorem}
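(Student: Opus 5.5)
We need to show that for every $u\in U$ the point $\bm{E}(u)$ satisfies $F(\bm{E}(u),\lambda(u))=0$ and $F_\lambda(\bm{E}(u),\lambda(u))=0$. The first equation is immediate: $F(x(t,\lambda),y(t,\lambda),\lambda)=0$ holds identically on $I\times\Lambda$, so in particular at $(t,\lambda)=\bm{e}(u)$. The work is in the second equation.

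Differentiate the identity $F(x(t,\lambda),y(t,\lambda),\lambda)\equiv 0$ in $t$ and in $\lambda$. Writing $\nabla F=(F_x,F_y)$ evaluated at $(\bm{\gamma}(t,\lambda),\lambda)$ and using the Euclidean dot product $\cdot$, this gives
\begin{align*}
\nabla F\cdot\bm{\gamma}_t=0,\qquad \nabla F\cdot\bm{\gamma}_\lambda+F_\lambda=0 .
\end{align*}
Substitute $\bm{\gamma}_t=\ell(\cos\theta\,\mathbb{L}^++\sin\theta\,\mathbb{L}^-)=\ell\,\bm{\mu}$. On the dense set $\overline{\Sigma}$, where $\ell\neq0$, the first relation becomes $\nabla F\cdot\bm{\mu}=0$. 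Both sides are continuous, so $\nabla F\cdot\bm{\mu}=0$ on all of $I\times\Lambda$.

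Next use the envelope condition from Theorem \ref{T3}: $n\cos\theta=m\sin\theta$ at $\bm{e}(u)$. Since $(\cos\theta,\sin\theta)\neq(0,0)$, this says $(m,n)$ is a scalar multiple of $(\cos\theta,\sin\theta)$ at $\bm{e}(u)$. Concretely, $(m,n)=c(u)(\cos\theta,\sin\theta)$ with $c(u)=m\cos\theta+n\sin\theta$. Hence
\begin{align*}
\bm{\gamma}_\lambda(\bm{e}(u))=c(u)\,\bm{\mu}(\bm{e}(u)),
\end{align*}
so $\nabla F\cdot\bm{\gamma}_\lambda=c\,\nabla F\cdot\bm{\mu}=0$ at $\bm{e}(u)$. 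The second differentiated identity then gives $F_\lambda(\bm{E}(u),\lambda(u))=0$. Together with $F=0$, this yields $\bm{E}(u)\in\mathcal{D}$ for every $u\in U$, i.e.\ $\bm{E}\subset\mathcal{D}$.

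The main obstacle is the step $\nabla F\cdot\bm{\mu}=0$ at points where $\ell(\bm{e}(u))=0$. These are exactly the singular points of the family members, where $\bm{\gamma}_t$ carries no tangent information. This is where the paper's standing assumption that $\overline{\Sigma}$ is dense is used, via continuity of $\nabla F\cdot\bm{\mu}$ on $I\times\Lambda$.

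A related subtlety is that $F$ must be smooth (at least $C^1$) near the relevant points so that the chain rule applies. In Example \ref{example astroid} the fractional powers fail to be smooth on the coordinate axes. I would state this regularity as an implicit hypothesis, since the statement writes $F$ as a smooth function in Section \ref{S2}. Beyond that, the argument is purely pointwise and needs no case split.
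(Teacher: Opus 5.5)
Your proof is correct and is essentially the paper's argument. The paper writes $(-F_x,F_y)=q\mathbb{L}^++p\mathbb{L}^-$ and uses the Lorentzian pairing, so density of $\overline{\Sigma}$ gives $q\sin\theta+p\cos\theta=0$ (your $\nabla F\cdot\bm{\mu}=0$); it then combines this with $n\cos\theta=m\sin\theta$ via a vanishing $2\times 2$ determinant to get $qn+pm=0$, i.e.\ $F_xx_\lambda+F_yy_\lambda=0$ along $\bm{e}$, which is exactly your observation that $\bm{\gamma}_\lambda(\bm{e}(u))$ is a multiple of $\bm{\mu}(\bm{e}(u))$.
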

\begin{proof}
Differentiating the identity $F(x(t,\lambda),y(t,\lambda),\lambda)=0$
 with respect to $t$ and $\lambda$ gives
\begin{align*}
&F_x(x(t,\lambda),y(t,\lambda),\lambda)x_t(t,\lambda)+F_y(x(t,\lambda),y(t,\lambda),\lambda)y_t(t,\lambda)=0,\\
&F_x(x(t,\lambda),y(t,\lambda),\lambda)x_\lambda(t,\lambda)+F_y(x(t,\lambda),y(t,\lambda),\lambda)y_\lambda(t,\lambda)
+F_\lambda(x(t,\lambda),y(t,\lambda),\lambda)=0.
\end{align*}
There exist functions $q(t,\lambda)$ and $p(t,\lambda)$ such that
\begin{align*}
(-F_x(x(t,\lambda),y(t,\lambda),\lambda),F_y(x(t,\lambda),y(t,\lambda),\lambda))=q(t,\lambda)\mathbb{L}^++p(t,\lambda)\mathbb{L}^-.
\end{align*}
Recall that
\begin{align*}
\bm{\gamma}_t(t,\lambda)&=(x_t(t,\lambda),y_t(t,\lambda))=\alpha(t,\lambda)\mathbb{L}^++\beta(t,\lambda)\mathbb{L}^-,\\
\bm{\gamma}_\lambda(t,\lambda)&=(x_\lambda(t,\lambda),y_\lambda(t,\lambda))=m(t,\lambda)\mathbb{L}^++n(t,\lambda)\mathbb{L}^-.\\
\end{align*}
A direct computation yields
\begin{align*}
&\big\langle\big(-F_x(x(t,\lambda),y(t,\lambda),\lambda),F_y(x(t,\lambda),y(t,\lambda),\lambda)\big),\big(x_t(t,\lambda),y_t(t,\lambda)\big)\big\rangle\\
=&-2(q(t,\lambda)\beta(t,\lambda)+p(t,\lambda)\alpha(t,\lambda))\\
=&0,
\end{align*}
hence
$$q(t,\lambda)\beta(t,\lambda)+p(t,\lambda)\alpha(t,\lambda)
=\ell(t,\lambda)\big(q(t,\lambda)\sin\theta(t,\lambda)+p(t,\lambda)\cos\theta(t,\lambda)\big)=0$$
for all $(t,\lambda)\in I\times\Lambda$.
Note that the set $\overline{\Sigma}=\{(t,\lambda)\in I\times\Lambda\mid\ell(t,\lambda)\neq0\}$ is dense in $I\times\Lambda$, it follows that
$$q(t,\lambda)\sin\theta(t,\lambda)+p(t,\lambda)\cos\theta(t,\lambda)=0$$
for all $(t,\lambda)\in I\times\Lambda$.
Now let $\bm{e}:U\rightarrow I\times\Lambda,~ \bm{e}(u)=(t(u),\lambda(u))$, be a pre-envelope of $\bm{\gamma}$. By Theorem \ref{T3},
$$n(\bm{e}(u))\cos\theta(\bm{e}(u))=m(\bm{e}(u))\sin\theta(\bm{e}(u))$$
holds for all $u\in U.$ Thus, we obtain
\begin{align*}
\left(\begin{array}{cc}
        q(\bm{e}(u)) & p(\bm{e}(u)) \\
        -m(\bm{e}(u)) & n(\bm{e}(u))
      \end{array}
\right)
\left(\begin{array}{c}
        \sin\theta(\bm{e}(u)) \\
       \cos\theta(\bm{e}(u))
      \end{array}
\right)=
\left(\begin{array}{c}
        0 \\
        0
      \end{array}
\right)
\end{align*}
holds for all $u\in U.$
%By the assumption the set $\{u\in U|(\alpha(\bm{e}(u)),\beta(\bm{e}(u)))\neq(0,0)\}$ is dense in $U$,
It follows that
\begin{align*}
q(\bm{e}(u))n(\bm{e}(u))+p(\bm{e}(u))m(\bm{e}(u))=0
\end{align*}
always holds for $u\in U.$ Then, we have
\begin{align*}
&F_x(x(t(u),\lambda(u)),y(t(u),\lambda(u)),\lambda(u))x_\lambda(t(u),\lambda(u))
+F_y(x(t(u),\lambda(u)),y(t(u),\lambda(u)),\lambda(u))y_\lambda(t(u),\lambda(u))\\
=&\langle(q(\bm{e}(u))\mathbb{L}^++p(\bm{e}(u))\mathbb{L}^-),(m(\bm{e}(u))\mathbb{L}^++n(\bm{e}(u))\mathbb{L}^-)\rangle\\
=&-2\left(q(\bm{e}(u))n(\bm{e}(u))+p(\bm{e}(u))m(\bm{e}(u))\right)\\
=&0.
\end{align*}
From the differentiated equation with respect to $\lambda$, this implies $$F_\lambda(x(t(u),\lambda(u)),y(t(u),\lambda(u)),\lambda(u))=0$$ for all $u\in U.$ Therefore, we have $\bm{E}(u)\in\mathcal{D}$ for all $u\in U$.
\end{proof}

To establish the converse, the following proposition is required.

\begin{proposition}\label{P4}
Let $\bm{\gamma}: I\times\Lambda\to \mathbb{R}^2_1$ be a one-parameter family of curves, and let $\bm{e}:U\rightarrow I\times\Lambda$ be a smooth curve satisfying the variability condition. Suppose the set of non-zero
points of $\ell\circ\bm{e}$ is dense in $U$ and the trace of $\bm{e}$ lies in the singular set of $\bm{\gamma}$, then $\bm{e}$ is a pre-envelope of $\bm{\gamma}$ (i.e., $\bm{E}=\bm{\gamma}\circ\bm{e}$ is an envelope of $\bm{\gamma}$).
\end{proposition}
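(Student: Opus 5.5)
We read the hypothesis "the trace of $\bm{e}$ lies in the singular set of $\bm{\gamma}$" as saying that $\bm{e}(u)$ is a singular point of the map $\bm{\gamma}:I\times\Lambda\to\mathbb{R}^2_1$ for every $u\in U$, i.e.\ that the Jacobian of $\bm{\gamma}$ has rank less than $2$ there. This is equivalent to $\det\big(\bm{\gamma}_t(\bm{e}(u)),\bm{\gamma}_\lambda(\bm{e}(u))\big)=0$. The plan is to rewrite this determinant condition in terms of the lightlike tangential data. We then divide by $\ell$ on the dense set where $\ell\circ\bm{e}\neq 0$, extend by continuity, and conclude with Theorem \ref{T3}. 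Since $\bm{e}$ is assumed to satisfy the variability condition, Theorem \ref{T3} reduces everything to checking $n(\bm{e}(u))\cos\theta(\bm{e}(u))=m(\bm{e}(u))\sin\theta(\bm{e}(u))$ for all $u\in U$.

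First, using $\bm{\gamma}_t=\alpha\mathbb{L}^++\beta\mathbb{L}^-$ and $\bm{\gamma}_\lambda=m\mathbb{L}^++n\mathbb{L}^-$, bilinearity of the determinant gives
\begin{align*}
\det(\bm{\gamma}_t,\bm{\gamma}_\lambda)=(\alpha n-\beta m)\det(\mathbb{L}^+,\mathbb{L}^-)=-2(\alpha n-\beta m).
\end{align*}
Because $\det(\mathbb{L}^+,\mathbb{L}^-)=-2\neq 0$, the singularity hypothesis becomes $\alpha(\bm{e}(u))n(\bm{e}(u))-\beta(\bm{e}(u))m(\bm{e}(u))=0$ for all $u\in U$. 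Substituting $(\alpha,\beta)=\ell(\cos\theta,\sin\theta)$, this reads
\begin{align*}
\ell(\bm{e}(u))\big(n(\bm{e}(u))\cos\theta(\bm{e}(u))-m(\bm{e}(u))\sin\theta(\bm{e}(u))\big)=0\quad\text{for all } u\in U.
\end{align*}

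Second, on the set $\{u\in U\mid \ell(\bm{e}(u))\neq0\}$, which is dense in $U$ by hypothesis, we may divide by $\ell(\bm{e}(u))$. This gives $n\cos\theta=m\sin\theta$ at $\bm{e}(u)$ for those $u$. The function $u\mapsto n(\bm{e}(u))\cos\theta(\bm{e}(u))-m(\bm{e}(u))\sin\theta(\bm{e}(u))$ is continuous and vanishes on a dense subset of $U$, so it vanishes identically on $U$. Theorem \ref{T3} then shows that $\bm{e}$ is a pre-envelope, and hence that $\bm{E}=\bm{\gamma}\circ\bm{e}$ is an envelope of $\bm{\gamma}$.

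The step I expect to need the most care is the interpretation of "singular set". If it were instead meant as the set of singular points of the individual curves $\bm{\gamma}(\cdot,\lambda)$, i.e.\ $\alpha=\beta=0$, then $\ell\circ\bm{e}$ would vanish identically. That would contradict the density hypothesis, so the map-singularity reading above is the consistent one. With that reading, the only analytic point is the density and continuity argument, which is exactly why the hypothesis that $\ell\circ\bm{e}$ is nonzero on a dense set is needed.
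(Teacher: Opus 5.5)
Your proof is correct and follows essentially the same route as the paper: both read the hypothesis as $\det(\bm{\gamma}_t,\bm{\gamma}_\lambda)=0$ along $\bm{e}$, cancel $\ell$ using the density of its non-zero points together with continuity to get linear dependence of $\bm{\mu}$ and $\bm{\gamma}_\lambda$, and then conclude via Theorem \ref{T3}. Your explicit identity $\det(\bm{\gamma}_t,\bm{\gamma}_\lambda)=-2(\alpha n-\beta m)=-2\ell(n\cos\theta-m\sin\theta)$ makes the paper's linear-dependence step concrete, and your reading of ``singular set'' matches the one used in the paper's converse theorem.
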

\begin{proof}
Since $\bm{e}(u)$ belongs to the singular set of $\bm{\gamma}$, we have
\begin{align*}
\mathrm{det}(\bm{\gamma}_t(\bm{e}(u)),\bm{\gamma}_\lambda(\bm{e}(u)))=0~~~~~~ \mbox{for all $u\in U$.}
\end{align*}
Thus $$\bm{\gamma}_t(\bm{e}(u))=\alpha(\bm{e}(u))\mathbb{L}^++\beta(\bm{e}(u))\mathbb{L}^-=\ell(\bm{e}(u))\bm{\mu}(\bm{e}(u))$$ and $\bm{\gamma}_\lambda(\bm{e}(u))$
are linearly dependent.  By
the assumption and continuous property, it follows $\bm{\gamma}_\lambda(\bm{e}(u))$ and $\bm{\mu}(\bm{e}(u))$
are linearly dependent. Using the expressions
\begin{align*}
\bm{\mu}(\bm{e}(u))=\cos\theta(\bm{e}(u))\mathbb{L}^++\sin\theta(\bm{e}(u))\mathbb{L}^-,\quad
\bm{\gamma}_\lambda(\bm{e}(u))=m(\bm{e}(u))\mathbb{L}^++n(\bm{e}(u))\mathbb{L}^-,
\end{align*}
linear dependence implies
$$n(\bm{e}(u))\cos\theta(\bm{e}(u))=m(\bm{e}(u))\sin\theta(\bm{e}(u))$$
for all $u\in U$. By Theorem \ref{T3}, $\bm{e}$ is a pre-envelope of $\bm{\gamma}$ and consequently $\bm{E}=\bm{\gamma}\circ\bm{e}$ is an envelope of $\bm{\gamma}$.
\end{proof}

\begin{theorem}
Let $\bm{\gamma}: I\times\Lambda\to \mathbb{R}^2_1$ be a one-parameter family of curves defined implicitly by $F(x,y,\lambda)=0$, that is, $F(x(t,\lambda),y(t,\lambda),\lambda)=0,$
where $\bm{\gamma}(t,\lambda)=(x(t,\lambda),y(t,\lambda))$. Let $\bm{e}:U\rightarrow I\times\Lambda$ be a smooth curve satisfying the variability
condition. Suppose that $\bm{E}(u)=\bm{\gamma}\circ\bm{e}(u)\in\mathcal{D}$, the set of non-zero
points of $\ell\circ\bm{e}$ is dense in $U$, and the set
\begin{align*}
\{u\in U\mid(F_x,F_y)(x(t(u),\lambda(u)),y(t(u),\lambda(u)),\lambda(u))\neq(0,0)\}
\end{align*}
is also dense in $U$, where $\bm{e}(u)=(t(u),\lambda(u))$. Then $\bm{e}$ is a pre-envelope of $\bm{\gamma}$ (i.e., $\bm{E}=\bm{\gamma}\circ\bm{e}$ is an envelope of $\bm{\gamma}$).
\end{theorem}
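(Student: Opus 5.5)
The plan is to reverse the argument of Theorem \ref{T4} and finish with the criterion of Theorem \ref{T3}. First I will fix the meaning of the hypothesis $\bm{E}(u)\in\mathcal{D}$. I read it as saying that $F_\lambda$ vanishes at $(x(t(u),\lambda(u)),y(t(u),\lambda(u)),\lambda(u))$, i.e.\ the $\lambda$ witnessing membership in $\mathcal{D}$ is $\lambda(u)$ itself. The identity $F=0$ holds automatically, since $F(x(t,\lambda),y(t,\lambda),\lambda)\equiv 0$.

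With this reading I will differentiate $F(x(t,\lambda),y(t,\lambda),\lambda)=0$ in $t$ and in $\lambda$, exactly as in the proof of Theorem \ref{T4}. I write $(-F_x,F_y)=q\mathbb{L}^++p\mathbb{L}^-$ along $\bm{\gamma}$. The $t$-derivative gives
$$q\beta+p\alpha=0 \quad\text{on } I\times\Lambda.$$
Writing $(\alpha,\beta)=\ell(\cos\theta,\sin\theta)$ and using the standing assumption that $\overline{\Sigma}$ is dense, I get
$$q\sin\theta+p\cos\theta=0 \quad\text{everywhere on } I\times\Lambda.$$
The $\lambda$-derivative at $\bm{e}(u)$, combined with $F_\lambda=0$ there, gives $F_xx_\lambda+F_yy_\lambda=0$. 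By the same Lorentzian pairing computation as in Theorem \ref{T4}, this equals $-2(qn+pm)$, so
$$q(\bm{e}(u))n(\bm{e}(u))+p(\bm{e}(u))m(\bm{e}(u))=0 \quad\text{for all } u\in U.$$

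Next I combine the two relations into the linear system
\[
\left(\begin{array}{cc} \sin\theta(\bm{e}(u)) & \cos\theta(\bm{e}(u)) \\ n(\bm{e}(u)) & m(\bm{e}(u)) \end{array}\right)
\left(\begin{array}{c} q(\bm{e}(u)) \\ p(\bm{e}(u)) \end{array}\right)=\left(\begin{array}{c} 0\\0\end{array}\right).
\]
Whenever $(F_x,F_y)\neq(0,0)$ at the point, $(q,p)\neq(0,0)$, because $\mathbb{L}^\pm$ are linearly independent. So on the dense set where the gradient is nonzero, the determinant vanishes:
$$m(\bm{e}(u))\sin\theta(\bm{e}(u))-n(\bm{e}(u))\cos\theta(\bm{e}(u))=0.$$
By continuity this identity extends to all of $U$. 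Since $\bm{e}$ satisfies the variability condition, Theorem \ref{T3} then shows that $\bm{e}$ is a pre-envelope and $\bm{E}$ is an envelope.

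The main obstacle is bookkeeping rather than analysis. One point is the interpretation of $\bm{E}(u)\in\mathcal{D}$ with the correct parameter $\lambda(u)$, which I will state explicitly. The other is where the density of nonzero points of $\ell\circ\bm{e}$ enters. In my plan it is only needed to pass from $q\beta+p\alpha=0$ to $q\sin\theta+p\cos\theta=0$ along $\bm{e}$. I can derive that identity along $\bm{e}$ directly from the hypothesis on $\ell\circ\bm{e}$ by continuity in $u$, which avoids relying solely on global density. I will present it this way, so that every hypothesis of the theorem is visibly used.
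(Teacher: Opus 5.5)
Your argument is correct. It rests on the same two density arguments as the paper, applied in the opposite order.

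The paper treats $(F_x,F_y)$ as a kernel vector of the matrix with rows $\bm{\gamma}_t(\bm{e}(u))$ and $\bm{\gamma}_\lambda(\bm{e}(u))$. It uses density of the nonzero-gradient set to get $\det(\bm{\gamma}_t,\bm{\gamma}_\lambda)=0$ along $\bm{e}$, so the trace of $\bm{e}$ lies in the singular set of $\bm{\gamma}$. Only then does it apply Proposition \ref{P4}, where density of the nonzero points of $\ell\circ\bm{e}$ lets one replace $\bm{\gamma}_t=\ell\bm{\mu}$ by $\bm{\mu}$.

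You divide out $\ell$ first, in the lightcone coordinates $(q,p)$ of Theorem \ref{T4}. You then use the gradient density to get $\det(\bm{\mu},\bm{\gamma}_\lambda)=0$ directly.

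The paper's ordering isolates the geometric intermediate fact that such curves lie in the singular set, and it reuses Proposition \ref{P4}. Your ordering bypasses that proposition and shows the theorem is literally the converse of the computation in Theorem \ref{T4}. Your explicit reading of $\bm{E}(u)\in\mathcal{D}$, with $\lambda(u)$ as the witnessing parameter, is the same one the paper uses without stating it.
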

\begin{proof}
Since $F(x,y,\lambda)=0$ is the implicit equation defining $\bm{\gamma}$, we have
\begin{align*}
&F_x(x(t,\lambda),y(t,\lambda),\lambda)x_t(t,\lambda)+F_y(x(t,\lambda),y(t,\lambda),\lambda)y_t(t,\lambda)=0,\\
&F_x(x(t,\lambda),y(t,\lambda),\lambda)x_\lambda(t,\lambda)+F_y(x(t,\lambda),y(t,\lambda),\lambda)y_\lambda(t,\lambda)
+F_\lambda(x(t,\lambda),y(t,\lambda),\lambda)=0.
\end{align*}
Since
$
\bm{E}(u)=\bm{\gamma}\circ\bm{e}(u)\in\mathcal{D},
$
it follows
\begin{align*}
F_\lambda(x(t(u),\lambda(u)),y(t(u),\lambda(u)),\lambda(u))=0
\end{align*}
for all $u\in U$ from Definition \ref{classical definition}. Thus, we have
\begin{align*}
\left(\begin{array}{cc}
        x_t(t(u),\lambda(u)) & y_t(t(u),\lambda(u)) \\
        x_\lambda(t(u),\lambda(u)) & y_\lambda(t(u),\lambda(u))
      \end{array}
\right)
\left(\begin{array}{c}
        F_x(x(t(u),\lambda(u)),y(t(u),\lambda(u)),\lambda(u)) \\
       F_y(x(t(u),\lambda(u)),y(t(u),\lambda(u)),\lambda(u))
      \end{array}
\right)=
\left(\begin{array}{c}
        0 \\
        0
      \end{array}
\right)
\end{align*}
for all $u\in U$.
Because the set
\begin{align*}
\{u\in U\mid(F_x,F_y)(x(t(u),\lambda(u)),y(t(u),\lambda(u)),\lambda(u))\neq(0,0)\}
\end{align*}
is dense in $U$, it follows that
\begin{align*}
\mathrm{det}(\bm{\gamma}_t(\bm{e}(u)),\bm{\gamma}_\lambda(\bm{e}(u)))=0
\end{align*}
for all $u\in U$. Therefore, the trace of $\bm{e}$ lies in the singular set of $\bm{\gamma}$. Since the set of non-zero
points of $\ell\circ\bm{e}$ is dense in $U$, by Proposition \ref{P4}, $\bm{e}$ is a pre-envelope of $\bm{\gamma}$, and consequently $\bm{E}$ is an envelope of $\bm{\gamma}$.
\end{proof}

\section{Contact at singular points}\label{Section5}
It is known that an envelope can also be characterized via contact. We therefore introduce the notion of contact between curves in the Lorentz-Minkowski plane.

\begin{definition}\label{D5}
Let $g:I\rightarrow\mathbb{R}^2_1$ be a curve, and let $\mathfrak{F}:\{(x,y)\in\mathbb{R}^2_1|f(x,y)=0\}$ be a curve given implicitly. Then $g$ is said to have {\it $(k+1)$-point contact} with $\mathfrak{F}$ at $u=u_0$ if
\begin{align*}
\frac{d^i}{du^i}f(g(u))=0~~(i=0,\ldots,k),\quad \frac{d^{k+1}}{du^{k+1}}f(g(u))\neq0.
\end{align*}
Moreover, $g$ is said to have {\it at least $(k+1)$-point contact} with $\mathfrak{F}$ at $u=u_0$ if
\begin{align*}
\frac{d^i}{du^i}f(g(u))=0~~(i=0,\ldots,k).
\end{align*}
\end{definition}

\begin{example}\label{E6}
Let $\bm{\gamma}:\mathbb{R}\times\mathbb{R}\rightarrow\mathbb{R}^2_1$ be a one-parameter family of lines defined by
$$\bm{\gamma}(t,\lambda)=(2t+\lambda^2,3t\lambda+\lambda^3).$$
Since
\begin{align*}
\bm{\gamma}_t(t,\lambda)&=(2,3\lambda)=\alpha(t,\lambda)\mathbb{L}^++\beta(t,\lambda)\mathbb{L}^-,\\
\bm{\gamma}_\lambda(t,\lambda)&=(2\lambda,3\lambda^2+3t)=m(t,\lambda)\mathbb{L}^++n(t,\lambda)\mathbb{L}^-,
\end{align*}
the lightlike tangential data of $\bm{\gamma}$ are
\begin{align*}
\alpha(t,\lambda)&=1+\frac{3}{2}\lambda,~~\beta(t,\lambda)=1-\frac{3}{2}\lambda,\\
m(t,\lambda)&=\frac{3}{2}\lambda^2+\lambda+\frac{3}{2}t,~~n(t,\lambda)=-\frac{3}{2}\lambda^2+\lambda-\frac{3}{2}t.
\end{align*}
There exist functions $\theta,\ell:I\times\Lambda\to \mathbb{R}$ such that
\begin{align*}
\cos\theta(t,\lambda)=\frac{\sqrt{2}(2+3\lambda)}{2\sqrt{4+9\lambda^2}},\quad
\sin\theta(t,\lambda)=\frac{\sqrt{2}(2-3\lambda)}{2\sqrt{4+9\lambda^2}},\quad
\ell(t,\lambda)=\frac{\sqrt{8 + 18\lambda^2}}{2},
\end{align*}
which satisfy the relation $(\alpha(t,\lambda),\beta(t,\lambda))
=\ell(t,\lambda)(\cos\theta(t,\lambda),\sin\theta(t,\lambda))$.

Consider the curve $\bm{e}:\mathbb{R}\rightarrow\mathbb{R}\times\mathbb{R}$ defined by $\bm{e}(u)=(0,u)$.  This curve satisfies the variability condition. Moreover, one verifies that
\begin{align*}
n(\bm{e}(u))\cos\theta(\bm{e}(u))=m(\bm{e}(u))\sin\theta(\bm{e}(u))=\frac{\sqrt{2}u(4-9u^2)}{4\sqrt{4+9u^2}},
\end{align*}
so by Theorem \ref{T3}, $\bm{e}$ is a pre-envelope of $\bm{\gamma}$. The corresponding envelope $\bm{E}$ is given by
\begin{align*}
\bm{E}(u)=(u^2,u^3),
\end{align*}
which has a singular point at $u=0$.
Each line in the family can be expressed implicitly as
$$\Gamma_{\lambda_0}=\{(x,y)\in\mathbb{R}^2_1|f_{\lambda_0}(x,y)=-3\lambda_0x+2y+\lambda_0^3=0\}.$$
To study the contact between $\bm{E}$ and $\Gamma_{\lambda_0}$, consider
\begin{align*}
f_{\lambda_0}(\bm{E}(u))=-3\lambda_0u^2+2u^3+\lambda_0^3.
\end{align*}
It is easy to see that $$f_{\lambda_0}(\bm{E}(u))=\frac{d}{du}f_{\lambda_0}(\bm{E}(u))=0$$
at $u=\lambda_0$. Hence, by Definition \ref{D5}, $\bm{E}$ and $\Gamma_{\lambda_0}$ have at least $2$-point contact at $u=\lambda_0$. \\
$\bullet$ If $u=\lambda_0\neq0$, then
\begin{align*}
f_{\lambda_0}(\bm{E}(u))=\frac{d}{du}f_{\lambda_0}(\bm{E}(u))=0, \quad \frac{d^2}{du^2}f_{\lambda_0}(\bm{E}(u))\neq0,
\end{align*}
so they have exactly 2-point contact.\\
$\bullet$ If $u=\lambda_0=0$, then
\begin{align*}
f_{\lambda_0}(\bm{E}(u))=\frac{d}{du}f_{\lambda_0}(\bm{E}(u))=\frac{d^2}{du^2}f_{\lambda_0}(\bm{E}(u))=0,
\quad \frac{d^3}{du^3}f_{\lambda_0}(\bm{E}(u))\neq0.
\end{align*}
so they have 3-point contact at the singular point of $\bm{E}$.
\end{example}
\begin{problem}\label{p2}
In Example \ref{E6}, the envelope $\bm{E}$ and the curve $\Gamma_{0}$ exhibit higher-order contact at the singular point of $\bm{E}$. This motivates a natural question: In the Lorentz-Minkowski plane, does higher-order contact always occur between an envelope of a one-parameter curve family and a member curve of the family at any singular point of the envelope?
\end{problem}

The following theorem gives the answer to Problem \ref{p2}.

\begin{theorem}
Let $\bm{\gamma}: I\times\Lambda\to \mathbb{R}^2_1$ be a one-parameter family of  curves defined implicitly by $F(x,y,\lambda)=0$, that is, $F(x(t,\lambda),y(t,\lambda),\lambda)=0,$
where $\bm{\gamma}(t,\lambda)=(x(t,\lambda),y(t,\lambda))$. Let $\bm{e}:U\rightarrow I\times\Lambda$ be a pre-envelope of $\bm{\gamma}$ (i.e., $\bm{E}=\bm{\gamma}\circ\bm{e}$ is an envelope of $\bm{\gamma}$). Suppose that the set
of non-zero
points of $\ell\circ\bm{e}$ is dense in $U$. % and that the set
%\begin{align*}
%\{u\in U\mid(F_x,F_y)(x(t(u),\lambda(u)),y(t(u),\lambda(u)),\lambda(u))\neq(0,0)\}
%\end{align*}
%is also dense in $U$, where $\bm{e}(u)=(t(u),\lambda(u))$.
If $u=u_0$ is a singular point of $\bm{E}$ and $\ell\circ\bm{e}(u_0)\neq0$, %and
%$$\frac{d}{du}(\alpha(\bm{e}(u))n(\bm{e}(u)))=0$$
%at $u=u_0$,
then $\bm{E}$ and $\Gamma_{\lambda_0}$ have at least $3$-point contact at $u=u_0$.
\end{theorem}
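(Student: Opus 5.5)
The plan is to reduce the contact condition to derivatives of a single auxiliary function built from $F$, and to use Theorem \ref{T4}, which gives $F_\lambda=0$ along the pre-envelope. Write $\bm{e}(u)=(t(u),\lambda(u))$ and $\lambda_0=\lambda(u_0)$. The member curve $\Gamma_{\lambda_0}$ is given implicitly by $f(x,y)=F(x,y,\lambda_0)$. By Definition \ref{D5}, it suffices to show that $g(u)=F(\bm{E}(u),\lambda_0)$ satisfies $g(u_0)=g'(u_0)=g''(u_0)=0$.

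\emph{Step 1: two identities along $\bm{e}$.} The defining identity $F(x(t,\lambda),y(t,\lambda),\lambda)=0$ gives
\[
h(u):=F(\bm{E}(u),\lambda(u))=0 \quad\text{for all } u\in U .
\]
The proof of Theorem \ref{T4} gives
\[
F_\lambda(\bm{E}(u),\lambda(u))=0 \quad\text{for all } u\in U .
\]
That argument uses only Theorem \ref{T3} and the standing density assumption on $\overline{\Sigma}$. Differentiating $h$ and using $F_\lambda\circ(\bm{E},\lambda)=0$, we obtain for all $u\in U$
\[
F_x(\bm{E}(u),\lambda(u))\,x'(u)+F_y(\bm{E}(u),\lambda(u))\,y'(u)=0, \qquad (\ast)
\]
where $\bm{E}(u)=(x(u),y(u))$ by abuse of notation.

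\emph{Step 2: the orders $0$ and $1$.} We have $g(u_0)=h(u_0)=0$. Next, $g'(u)=F_x(\bm{E}(u),\lambda_0)x'(u)+F_y(\bm{E}(u),\lambda_0)y'(u)$. Since $u_0$ is a singular point of $\bm{E}$, $\bm{E}'(u_0)=\bm{0}$, so $g'(u_0)=0$.

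\emph{Step 3: the order $2$.} This is the key step. Differentiating $g'$ once more and evaluating at $u_0$, every term containing $\bm{E}'(u_0)$ vanishes (these include the Hessian terms of $F$). This leaves
\[
g''(u_0)=F_x(\bm{E}(u_0),\lambda_0)\,x''(u_0)+F_y(\bm{E}(u_0),\lambda_0)\,y''(u_0).
\]
To see that this is zero, differentiate $(\ast)$ with respect to $u$. The derivative of $(F_x,F_y)(\bm{E}(u),\lambda(u))$ is paired with $\bm{E}'(u)$, so that term vanishes at $u_0$. What remains is exactly
\[
F_x(\bm{E}(u_0),\lambda(u_0))\,x''(u_0)+F_y(\bm{E}(u_0),\lambda(u_0))\,y''(u_0)=0.
\]
Since $\lambda(u_0)=\lambda_0$, this expression is $g''(u_0)$, so $g''(u_0)=0$ and $\bm{E}$ has at least $3$-point contact with $\Gamma_{\lambda_0}$ at $u_0$.

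The main obstacle is Step 3, namely realizing that the second derivative of the frozen-parameter function $g$ coincides at $u_0$ with the derivative of the moving-parameter identity $(\ast)$. This needs both $F_\lambda=0$ along $\bm{e}$, which is why Theorem \ref{T4} is invoked, and $\bm{E}'(u_0)=\bm{0}$. The hypotheses that $\ell\circ\bm{e}$ is nonzero on a dense set and $\ell\circ\bm{e}(u_0)\neq0$ enter only through the applicability of Theorems \ref{T3} and \ref{T4}. If $F_\lambda=0$ along $\bm{e}$ had to be rederived directly, I would use them as in the proof of Theorem \ref{T4}: from $\ell\neq0$, $\bm{\gamma}_t$ is a nonzero multiple of $\bm{\mu}$, and by Theorem \ref{T3}, $\bm{\gamma}_\lambda(\bm{e}(u))$ is parallel to $\bm{\mu}(\bm{e}(u))$. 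Then $(F_x,F_y)\cdot\bm{\gamma}_\lambda=0$ along $\bm{e}$, which forces $F_\lambda=0$ there.
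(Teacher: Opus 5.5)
Your proof is correct, and for the key second-order step it takes a different and much shorter route than the paper.

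\textbf{The paper's route.} The paper works entirely in the lightcone frame. It writes $(-F_x,F_y)=q\mathbb{L}^++p\mathbb{L}^-$ along $\bm{e}$ and differentiates the envelope identity $\alpha n-\beta m=0$. It then introduces auxiliary quantities $A_1,\dots,A_4$. Using $\alpha(\bm{e}(u_0))\neq0$ (which comes from $\ell\circ\bm{e}(u_0)\neq0$) and the density of nonzero points of $\ell\circ\bm{e}$, it derives a relation. Combined with the singular-point equations $\alpha t'+m\lambda'=\beta t'+n\lambda'=0$ at $u_0$, this relation is shown to be equivalent to $\frac{d^2}{du^2}f_{\lambda(u_0)}(\bm{E}(u_0))=0$.

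\textbf{Your route.} You isolate the single fact that matters, $F_\lambda(\bm{E}(u),\lambda(u))\equiv0$, taken from the proof of Theorem~\ref{T4}. You turn it into the identity $(\ast)$ and differentiate once. Then $\bm{E}'(u_0)=\bm{0}$ kills both the Hessian term in $g''(u_0)$ and the derivative-of-gradient term in the derivative of $(\ast)$. The paper's condition (\ref{la}) is essentially the frame version of your differentiated $(\ast)$. In fact it holds outright, since $\alpha q+\beta p$ and $mq+np$ vanish identically along $\bm{e}$. The paper's route keeps everything expressed through the lightlike tangential data. Yours is frame-free, more transparent, and more general.

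\textbf{On generality.} Your last paragraph undersells what you proved. Theorem~\ref{T3} needs only the variability condition, and the proof of Theorem~\ref{T4} needs only the standing assumption that $\overline{\Sigma}$ is dense. So your argument never uses $\ell\circ\bm{e}(u_0)\neq0$. It uses the density of nonzero points of $\ell\circ\bm{e}$ only if you choose to rederive $F_\lambda=0$ along $\bm{e}$ without the standing assumption. Under the standing assumptions of Section~\ref{section4}, you have therefore shown at least $3$-point contact at every singular point of every envelope. The extra hypotheses on $\ell\circ\bm{e}$ in the statement are superfluous.
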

\begin{proof}
Let $ \bm{e}(u)=(t(u),\lambda(u))$ be a pre-envelope of $\bm{\gamma}$, and $f_{\lambda(u_0)}=F(x,y,\lambda(u_0))=0$ the implicit equation defining $\Gamma_{\lambda_0}$. Then
\begin{align*}
f_{\lambda(u_0)}(\bm{E}(u))=F(x(t(u),\lambda(u)),y(t(u),\lambda(u)),\lambda(u_0)),
\end{align*}
where $\bm{E}(u)=\bm{\gamma}(t(u),\lambda(u))=(x(t(u),\lambda(u)),y(t(u),\lambda(u)))$.
Since
\begin{align*}
F(x(t(u),\lambda(u)),y(t(u),\lambda(u)),\lambda(u))=0
\end{align*}
for all $u\in U,$ it follows that
\begin{align*}
f_{\lambda(u_0)}(\bm{E}(u_0))=F(x(t(u_0),\lambda(u_0)),y(t(u_0),\lambda(u_0)),\lambda(u_0))=0.
\end{align*}
Differentiating $F(x(t,\lambda),y(t,\lambda),\lambda)=0$ with respect to $t$ and $\lambda$ gives
\begin{align*}
&F_x(x(t,\lambda),y(t,\lambda),\lambda)x_t(t,\lambda)+F_y(x(t,\lambda),y(t,\lambda),\lambda)y_t(t,\lambda)=0,\\
&F_x(x(t,\lambda),y(t,\lambda),\lambda)x_\lambda(t,\lambda)+F_y(x(t,\lambda),y(t,\lambda),\lambda)y_\lambda(t,\lambda)
+F_\lambda(x(t,\lambda),y(t,\lambda),\lambda)=0.
\end{align*}
Because
%the set $\{u\in U|(\alpha(\bm{e}(u)),\beta(\bm{e}(u)))\neq(0,0)\}$ is dense in $U$ and
$\bm{e}$ is a pre-envelope of $\bm{\gamma}$, it follows
\begin{align*}
&F_x(x(t(u),\lambda(u)),y(t(u),\lambda(u)),\lambda(u))x_t(t(u),\lambda(u))
+F_y(x(t(u),\lambda(u)),y(t(u),\lambda(u)),\lambda(u))y_t(t(u),\lambda(u))=0,\\
&F_x(x(t(u),\lambda(u)),y(t(u),\lambda(u)),\lambda(u))x_\lambda(t(u),\lambda(u))
+F_y(x(t(u),\lambda(u)),y(t(u),\lambda(u)),\lambda(u))y_\lambda(t(u),\lambda(u))=0
\end{align*}
for all $u\in U$ from Theorem \ref{T4}. Consequently,
\begin{align*}
\frac{d}{du}f_{\lambda(u_0)}(\bm{E}(u))=&\frac{d}{du}F(x(t(u),\lambda(u)),y(t(u),\lambda(u)),\lambda(u_0))\\
=&[F_x(x(t(u),\lambda(u)),y(t(u),\lambda(u)),\lambda(u_0))x_t(t(u),\lambda(u))\\&+F_y(x(t(u),\lambda(u)),y(t(u),\lambda(u)),\lambda(u_0))y_t(t(u),\lambda(u))]t'(u)\\
&+[F_x(x(t(u),\lambda(u)),y(t(u),\lambda(u)),\lambda(u_0))x_\lambda(t(u),\lambda(u))\\&+F_y(x(t(u),\lambda(u)),y(t(u),\lambda(u)),\lambda(u_0))y_\lambda(t(u),\lambda(u))]
\lambda'(u)\\
=&0
\end{align*}
for $u=u_0.$ Hence
\begin{align*}
f_{\lambda(u_0)}(\bm{E}(u_0))=\frac{d}{du}f_{\lambda(u_0)}(\bm{E}(u_0))=0.
\end{align*}
We now show that under the given hypotheses,
\begin{align*}
\frac{d^2}{du^2}f_{\lambda(u_0)}(\bm{E}(u_0))=0.
\end{align*}
For each $u\in U,$ there exist functions $q(\bm{e}(u))$ and $p(\bm{e}(u))$ such that
\begin{align*}
(-F_x(\bm{e}(u)),F_y(\bm{e}(u)))=q(\bm{e}(u))\mathbb{L}^++p(\bm{e}(u))\mathbb{L}^-.
\end{align*}
Using the expressions
\begin{align*}
\bm{\gamma}_t(\bm{e}(u))=\alpha(\bm{e}(u))\mathbb{L}^++\beta(\bm{e}(u))\mathbb{L}^-,\quad
\bm{\gamma}_\lambda(\bm{e}(u))=m(\bm{e}(u))\mathbb{L}^++n(\bm{e}(u))\mathbb{L}^-,
\end{align*}
we obtain that
\begin{align}\label{q}
\alpha(\bm{e}(u))q(\bm{e}(u))+\beta(\bm{e}(u))p(\bm{e}(u))=0
\end{align}
and
\begin{align*}
m(\bm{e}(u))q(\bm{e}(u))+n(\bm{e}(u))p(\bm{e}(u))=0.
\end{align*}
%Since the set
%of non-zero
%points of function $\ell\circ\bm{e}$ is dense in $U$, we have
%\begin{align}
%\cos\theta(\bm{e}(u))q(\bm{e}(u))+\sin\theta(\bm{e}(u))p(\bm{e}(u))=0.
%\end{align}
Since $\bm{e}$ is a pre-envelope of $\bm{\gamma}$, $n(\bm{e}(u))\cos\theta(\bm{e}(u))=m(\bm{e}(u))\sin\theta(\bm{e}(u))$ holds for all $u\in U.$ It follows $\alpha(\bm{e}(u))n(\bm{e}(u))-\beta(\bm{e}(u))m(\bm{e}(u))=0$ holds for all $u\in U.$ Differentiating with respect to $u$ gives
\begin{align}\label{d/du}
\frac{d}{du}\big(\alpha(\bm{e}(u))n(\bm{e}(u))-\beta(\bm{e}(u))m(\bm{e}(u))\big)=0.
\end{align}
Since $\ell\circ\bm{e}(u_0)\neq0$, we have $(\alpha(\bm{e}(u_0)),\beta(\bm{e}(u_0)))\neq(0,0)$, assume without loss of generality that $\alpha(\bm{e}(u_0))\neq0$.  By Equation (\ref{d/du}), we have
\begin{align*}
&\alpha(\bm{e}(u))A_1(u)
+\beta(\bm{e}(u))A_2(u)\\
=&-\frac{d\beta(\bm{e}(u))}{du}\alpha(\bm{e}(u))m(\bm{e}(u))+\frac{dn(\bm{e}(u))}{du}\alpha^2(\bm{e}(u))
+\frac{d\alpha(\bm{e}(u))}{du}\beta(\bm{e}(u))m(\bm{e}(u))\\
&-\frac{dm(\bm{e}(u))}{du}\alpha(\bm{e}(u))\beta(\bm{e}(u))\\
=&\alpha(\bm{e}(u))\frac{d}{du}\big(\alpha(\bm{e}(u))n(\bm{e}(u))-\beta(\bm{e}(u))m(\bm{e}(u))\big)\\
=&0,
\end{align*}
where
\begin{align*}
A_1(u)=-\frac{d\beta(\bm{e}(u))}{du}m(\bm{e}(u))+\frac{dn(\bm{e}(u))}{du}\alpha(\bm{e}(u)),\quad
A_2(u)=\frac{d\alpha(\bm{e}(u))}{du}m(\bm{e}(u))-\frac{dm(\bm{e}(u))}{du}\alpha(\bm{e}(u)).
\end{align*}
By Equation (\ref{q}), we have
\begin{align*}
\left(\begin{array}{cc}
        A_1(u)        & A_2(u)\\
        q(\bm{e}(u)) & p(\bm{e}(u))
      \end{array}
\right)
\left(\begin{array}{c}
        \alpha(\bm{e}(u)) \\
       \beta(\bm{e}(u))
      \end{array}
\right)=
\left(\begin{array}{c}
        0 \\
        0
      \end{array}
\right),
\end{align*}
Since the set
of non-zero
points of function $\ell\circ\bm{e}$ is dense in $U$, it means that the set $\{u\in U\mid\big(\alpha(\bm{e}(u)),\beta(\bm{e}(u))\big)\neq(0,0)\}$ is dense in $U$. Then
\begin{align*}
m(\bm{e}(u))A_3(u)
-\alpha(\bm{e}(u))A_4(u)
=-p(\bm{e}(u))A_1(u)+q(\bm{e}(u))A_2(u)=0,
\end{align*}
where
\begin{align*}
A_3(u)=\frac{d\alpha(\bm{e}(u))}{du}q(\bm{e}(u))+\frac{d\beta(\bm{e}(u))}{du}p(\bm{e}(u)),\quad
A_4(u)=\frac{dn(\bm{e}(u))}{du}p(\bm{e}(u))+\frac{dm(\bm{e}(u))}{du}q(\bm{e}(u)).
\end{align*}
Because $u=u_0$ is a singular point of $\bm{E}$, from the proof of Theorem \ref{T3}, we obtain that
\begin{align}\label{singular p}
\alpha(\bm{e}(u))t'(u)+m(\bm{e}(u))\lambda'(u)=0, \quad
\beta(\bm{e}(u))t'(u)+n(\bm{e}(u))\lambda'(u)=0
\end{align}
at $u=u_0$. Then, we have
\begin{align*}
\left(\begin{array}{cc}
        A_4(u) & -A_3(u) \\
        t'(u) & \lambda'(u)
      \end{array}
\right)
\left(\begin{array}{c}
        \alpha(\bm{e}(u)) \\
       m(\bm{e}(u))
      \end{array}
\right)=
\left(\begin{array}{c}
        0 \\
        0
      \end{array}
\right)
\end{align*}
at $u=u_0$. Because $\alpha(\bm{e}(u_0))\neq0$, the following equation holds at $u=u_0$,
\begin{equation}\label{t}
\begin{split}
\lambda'(u)\bigg(\frac{dn(\bm{e}(u))}{du}p(\bm{e}(u))+\frac{dm(\bm{e}(u))}{du}q(\bm{e}(u))\bigg)
+t'(u)\bigg(\frac{d\alpha(\bm{e}(u))}{du}q(\bm{e}(u))+\frac{d\beta(\bm{e}(u))}{du}p(\bm{e}(u))\bigg)=0.
\end{split}
\end{equation}
On the other hand, a straightforward computation shows that
\begin{align*}
\frac{d^2}{du^2}f_{\lambda(u_0)}(\bm{E}(u_0))=0
\end{align*}
holds if and only if
\begin{align}\label{la}
\frac{d}{du}\big(\alpha(\bm{e}(u))q(\bm{e}(u))+\beta(\bm{e}(u))p(\bm{e}(u))\big)t'(u)
+\frac{d}{du}\big(m(\bm{e}(u))q(\bm{e}(u))+n(\bm{e}(u))p(\bm{e}(u))\big)\lambda'(u)=0
\end{align}
holds at $u=u_0$. By Equation (\ref{singular p}), condition (\ref{la}) at $u=u_0$ is equivalent to condition (\ref{t}) at $u=u_0$. Thus, all required conditions are satisfied, and we conclude that
\begin{align*}
f_{\lambda(u_0)}(\bm{E}(u_0))=\frac{d}{du}f_{\lambda(u_0)}(\bm{E}(u_0))=\frac{d^2}{du^2}f_{\lambda(u_0)}(\bm{E}(u_0))=0,
\end{align*}
i.e., $\bm{E}$ and $\Gamma_{\lambda_0}$ have at least $3$-point contact at $u=u_0$.
\end{proof}

%%%%%%%%%%%%%%%%%%%%%%%%%%%%%%%%%%%%%%%%%%%
\section*{Acknowledgement}
%\begin{acknowledgements}
%\end{acknowledgements}
{
~~~This work was completed during the visit of the second author to Muroran Institute of Technology.
We would like to thank Muroran Institute of Technology for their kind hospitality. The first author was partially supported by
JSPS KAKENHI (Grant No. 24K06728). The second author was supported by Liaoning Provincial Science and Technology Department Project (Grant No. 2025-MSLH-096).

}
%
%
%%%%%%%%%%%%%%%%%%%%%%%%%%%%%%%%%%%%%%%%%%%%%%%%%%%%


\begin{thebibliography}{99}

\bibitem{BG1}
Bruce J.W., Giblin P.J., \emph{What is an envelope?}  Math. Gaz., {\bf65} (1981),  186-192.


\bibitem{brucegiblin}Bruce J.W., Giblin P.J.,
\emph{Curves and Singularities}. A Geometrical Introduction
to Singularity Theory, 2nd edn.
Cambridge University Press, Cambridge, 1992.

\bibitem{CT}
Chen L., Takahashi M.,
\emph{Lightcone framed curves in the Lorentz-Minkowski 3-space.}
Turk. J. Math., {\bf 48} (2024), 307--326.



\bibitem{EFK}
Ei S., Fujii K., Kunihiro T.,
\emph{Renormalization-group method for reduction of
evolution equations; invariant manifolds and envelopes.}
Ann. Phys., {\bf 280} (2000), 236--298.

\bibitem{fukunagatakahashi}
Fukunaga T., Takahashi M.,
\emph{Existence and uniqueness for Legendre curves}, J.~Geom.,
{\bf 104} (2013), 297--307.
%https://doi.org/10.1007/s00022-013-0162-6

\bibitem{GAS}
Gray A., Abbena E., Salamon S.,
\emph{Modern Differential Geometry of Curves
and Surfaces with Mathematica}, Studies in Advanced Mathematics, 3rd edn.
Chapman and Hall/CRC, Boca Raton (2006).




\bibitem{HST}Honda A., Saji K., Teramoto K.,
\emph{Mixed type surfaces with bounded Gaussian
curvature in three-dimensional Lorentzian
manifolds},
Adv. Math., {\bf 365} (2020), 107036.





\bibitem{izumiya1}
Izumiya S., \emph{Singular solutions of first-order differential equations},
 Bull. Lond. Math. Soc., {\bf 26} (1994), 69--74.



\bibitem{izumiya}
Izumiya S., Romero Fuster M.C., Takahashi M.,
\emph{Evolutes of curves in the Lorentz-Minkowski plane},
Adv. Stud. Pure Math.,
{\bf 78} (2018), 313--330.

%\bibitem{LPTY}
%Li Y., Pei D., Takahashi M., Yu H., \emph{Envelopes of Legendre curves in the unit spherical bundle over the unit sphere}, Quart. J. Math., {\bf 69} (2018), 631--653.



%\bibitem{izumiya}S.~Izumiya,
%\emph{Singular solutions of first order differential equations},
%Bull. London Math. Soc.,
%{\bf 26} (1994), 69--74.
%https://doi.org/10.1112/blms/26.1.69
\bibitem{LP}
Liu T., Pei D., \emph{Mixed-type curves and the lightcone
frame in Minkowski 3-space}, Int. J Geom. Methods M.,
(2020), 2050088.


\bibitem{nishimura}Nishimura T.,
\emph{Hyperplane families creating envelopes},
Nonlinearity,  {\bf 35} (2022),  2588.
%https://doi.org/10.1088/1361-6544/ac61a0

\bibitem{Takahashi1}
Takahashi M.,
\emph{On completely integrable first order ordinary differential equations.},
In: Proceedings of the Australian-Japanese Workshop on Real and Complex Singularities, (2007) pp.388-418.

\bibitem{Takahashi}
Takahashi M.,~
\emph{Envelopes of Legendre curves in the unit tangent bundle
over the Euclidean plane}, Results Math., {\bf 71} (2017),  1473--1489.

%\bibitem{TY}
%Takahashi M., Yu H.,~
%\emph{Envelopes of families of framed surfaces and singular solutions of first-order partial differential equations}, Proc. R. Soc. Edinb. A, {\bf 151} (2021),  1515--1542.



\end{thebibliography}
\end{document}